\newtheorem{secn}{Definition}[section]
\newtheorem{thm}[secn]{Theorem}
\newtheorem{prop}[secn]{Proposition}
\newtheorem{lem}[secn]{Lemma}
\newtheorem{defn}[secn]{Definition}
\newtheorem{rem}[secn]{Remark}
\newcommand\EE{{\mathcal E}}
\newcommand\FF{{\mathcal F}}
\newcommand\HH{{\mathcal H}}
\newcommand\LL{{\mathcal L}}
\newcommand\MM{{\mathcal M}}
\newcommand\OO{{\mathcal O}}
\newcommand\PP{{\mathcal P}}
\newcommand\SSS{{\mathcal S}}
\newcommand\TT{{\mathcal T}}
\newcommand\UU{{\mathcal U}}
\newcommand\YY{{\mathcal Y}}
\newcommand\PMF{{\PP\kern-2pt\MM\FF}}
\newcommand\PML{{\PP\kern-2pt\MM\LL}}
\newcommand\ep{\epsilon}
\newcommand\hhat{\widehat}
\newcommand{\fsubd}{\mathrel{{\scriptstyle\searrow}\kern-1ex^d\kern0.5ex}}
\newcommand{\bsubd}{\mathrel{{\scriptstyle\swarrow}\kern-1.6ex^d\kern0.8ex}}
\newcommand{\fsubeq}{\mathrel{\raise-.7ex\hbox{$\overset{\searrow}{=}$}}}
\newcommand{\bsubeq}{\mathrel{\raise-.7ex\hbox{$\overset{\swarrow}{=}$}}}
\newcommand{\bbar}{\overline}
\newcommand{\tsh}[1]{\left\{\kern-.9ex\left\{#1\right\}\kern-.9ex\right\}}
\begin{document}
\title{Complex of Relatively Hyperbolic Groups}
\author{Abhijit Pal}
\thanks{Research of the first author is supported by INSPIRE Research Grant}
\address{Department of Mathematics and Statistics, India Institute of Technology-Kanpur}
\email{abhipal@iitk.ac.in}
\author{Suman Paul}
\address{Department of Mathematics and Statistics, India Institute of Technology-Kanpur}
\email{sumanpl@iitk.ac.in}
\date{}
\maketitle
\begin{abstract}
In this article, we prove a combination theorem for a complex of relatively hyperbolic groups. It is a generalization of Martin's \cite{martin}
work for combination of hyperbolic groups over a finite $M_K$-simplicial complex, where $k\leq 0$.
\end{abstract}
\begin{center}{AMS Subject Classification : 20F65, 20F67, 20E08}\end{center}
\section{Introduction}

In \cite{dah}, Dahmani showed that if $G$ is the fundamental group of an acylindrical finite graph of relatively hyperbolic groups
with edge groups fully quasi-convex in the respective vertex groups, then $G$ is hyperbolic relative to the images of the maximal parabolic
subgroups of vertex groups and their conjugates in $G$. By gluing the relative hyperbolic boundaries of each local groups, Dahmani constructed
a compact metrizable space $\partial G$ on which $G$ has convergence action and the limit points are either conical or bounded parabolic.
So, $G$ is a relatively hyperbolic group due to Yaman \cite{yaman}. Using these ideas, Martin \cite{martin} generalized this combination
theorem for complex of hyperbolic groups. Let $G(\YY)$ be a strictly developable non-positively
curved simple complex of groups over a finite $M_k$ simplicial complex with $k\leq 0$. Let $G$ be the fundamental group of $G(\YY)$
and $X$ be a universal covering of $G(\YY)$. Martin, in \cite{martin}, proved that if $X$ is hyperbolic, local groups are hyperbolic,
local maps are quasiconvex embeddings and the action of $G$ on $X$ is acylindrical 
(i.e. there exists $K>0$ such that any pair of points of diameter at least $K$ in $X$ has finite pointwise stabilizer, see Definition \ref{acy act}), then $G$ is hyperbolic. 
In this article, we prove a relative hyperbolic version of Martin's result.

\begin{thm}\label{main theorem}
 Let $\mathcal{G(Y)}$ be a strictly developable simple complex of finitely generated groups over a finite $M_{\kappa}$-simplicial complex $Y$ 
 with $k\leq 0$
 and satisfying the following properties:
\begin{itemize}
  \item For each vertex $v$ of $Y$, the vertex group $G_{v}$ is  relatively hyperbolic to a maximal parabolic subgroup ${P}_{v}$.
  \item Local maps $\phi_{\sigma,\sigma^{'}}$ are fully quasi-convex embeddings i.e. if $\sigma\subset\sigma'$ 
  then $\phi_{\sigma,\sigma^{'}}(G_{\sigma'})$
  is fully quasiconvex in $G_{\sigma}$,
  \item The universal covering $X$ of $\mathcal{G(Y)}$ is hyperbolic.
  \item The action of $G$, the fundamental group of $\mathcal{G(Y)}$, on $X$ is acylindrical.
\end{itemize}   
Then $G$ is hyperbolic relative to $\mathcal{P}$, where $\mathcal{P}$ is the collection of the images of ${P}_{v}$ in $G$
under the natural embedding $G_v\hookrightarrow G$ and there conjugates in $G$. Furthermore, local groups are fully quasiconvex in $G$.
\end{thm}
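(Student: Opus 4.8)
The plan is to follow the Dahmani--Yaman strategy used by Martin, adapting it from hyperbolic to relatively hyperbolic vertex groups. Concretely, I will produce a compact metrizable space $\partial G$ carrying an action of $G$ by homeomorphisms, prove that this action is a convergence action, and show that every limit point is either conical or bounded parabolic, with the maximal parabolic stabilizers being exactly the conjugates of the $P_v$. By Yaman's characterization \cite{yaman} this forces $G$ to be hyperbolic relative to $\mathcal{P}$, so the whole theorem reduces to building $\partial G$ and analyzing the dynamics.

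For the construction I would first record the geometric data of the development: since each $G_v$ is hyperbolic relative to $P_v$, its Bowditch boundary $\partial G_v$ is a compact space, and the fully quasi-convex local maps $\phi_{\sigma,\sigma'}$ induce edge-group-equivariant topological embeddings of the limit sets $\Lambda(G_{\sigma'})\hookrightarrow\partial G_\sigma$ that are properly nested. Using the hyperbolicity of $X$, I attach to each vertex of the development a copy of the boundary of its stabilizer and glue it to $\partial X$ along the corresponding limit set, matching adjacent vertex boundaries along the (quasiconvex, properly nested) limit sets of the common face groups, obtaining
$$\partial G \;=\; \Big(\partial X \;\sqcup\; \bigsqcup_{v\in \vertices(X)} \partial G_v\Big)\big/\!\sim .$$
The real content here is the topology: one specifies a neighborhood basis using shadows of half-spaces in $X$ together with the cone topologies of the $\partial G_v$, and then checks that $\partial G$ is compact, metrizable, and acted on by $G$ by homeomorphisms. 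Acylindricity enters at this stage to make the gluing data sufficiently locally finite, which is what yields second countability and compactness.

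The heart of the argument is proving that the $G$-action on $\partial G$ is a convergence action, i.e. that $G$ acts properly discontinuously on the space of distinct triples. Given a sequence $g_n\in G$, the hyperbolicity of $X$ supplies north--south type dynamics on the $\partial X$-part, while the relative hyperbolicity of the $G_v$ supplies convergence dynamics on each $\partial G_v$. \emph{The main obstacle is the interface between these two regimes}: I must rule out degenerations in which $g_n$ drifts through infinitely many vertex spaces while keeping three boundary points spread apart, and it is precisely the acylindricity hypothesis (pairs of points of $X$ at distance at least $K$ have finite pointwise stabilizer), combined with full quasi-convexity of the face inclusions preventing the edge-group limit sets from accumulating, that excludes this and gives properness on triples. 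Once the convergence action is in place, conical points of $\partial X$ and of the $\partial G_v$ remain conical, while the fixed points of the $P_v$ remain bounded parabolic in $\partial G$: their stabilizers are conjugates of $P_v$ acting cocompactly on the complement of the fixed point, again using full quasi-convexity and acylindricity to see that no spurious identifications occur at parabolic points.

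The supplementary claim that local groups are fully quasi-convex in $G$ falls out of the same picture: the construction realizes the limit set of $G_v$ in $\partial G$ as the embedded copy of $\partial G_v$, a closed, $G_v$-invariant, quasiconvex subset, giving relative quasi-convexity of $G_v$ in $G$, and the ``fully'' part --- that $G_v\cap P$ is finite or finite index in each parabolic $P\in\mathcal{P}$ --- is exactly what full quasi-convexity of the local maps (propagated by acylindricity) delivers. An equivalent route, which I would keep in reserve as a sanity check, is to build the fattened development $EX$ by replacing each vertex of $X$ with a relative Cayley graph of its stabilizer, glue via the local maps, attach Groves--Manning combinatorial horoballs to all translates of the parabolic cosets to form a cusped space $\mathcal{E}X$, and prove $\mathcal{E}X$ is hyperbolic by a Bestvina--Feighn/Mj--Sardar flaring argument; there the same acylindricity input is the crux, and the geometric $G$-action on $EX$ yields relative hyperbolicity directly.
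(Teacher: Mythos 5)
Your proposal follows essentially the same route as the paper: glue the Bowditch boundaries of the vertex stabilizers to $\partial X$ along the limit sets of the fully quasi-convex face groups (with the cone topology making $\partial G$ compact metrizable), use acylindricity plus the convergence and finite-intersection properties of fully quasi-convex subgroups to get a convergence action, classify limit points as conical or bounded parabolic, and conclude via Yaman's theorem, with full quasi-convexity of the local groups read off from the finiteness of domains. The only cosmetic difference is that the paper also builds the augmented complex of spaces $EG^h$ (your ``reserve'' cusped-space picture) as the ambient object in which $\partial G$ sits, but the dynamical argument is the one you describe.
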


The combination theorem of this sort for finite graph of hyperbolic groups was first given by Bestvina \& Feighn (\cite{best}). Here, the 
edge groups embed quasi-isometrically into vertex groups and the graph of groups satisfies `hallway flare condition'.
This combination theorem was generalized by Mj. \& Reeves (\cite{Mj com}) for relatively hyperbolic case.
Further, Mj.\& Sardar in \cite{Mj Sardar} generalized these combination theorems for metric bundles with base space hyperbolic
and fibers are uniformly hyperbolic metric spaces. Let $S$ be a closed (not closed) orientable surface of negative Euler characteristic and
$\phi:S\to S$ be a pseudo-Anosov homomorphism (fixing punctures and boundary pointwise, if non-empty).
Let $M_{\phi}$ be the mapping torus, then it follows from combination theorem of Bestvina \& Feighn in \cite{best} (Mj.\& Reeves in \cite{Mj com})
that the fundamental group $\pi_1(M_{\phi})$ is hyperbolic (relatively hyperbolic). However, in this case $\pi_1(S)$ being infinite index normal subgroup
in $\pi_1(M)$ is not quasiconvex and $\pi_1(M_\phi)$ does not acts acylindrically on the Bass-Serre tree $\mathbb R$ (real numbers).
Ilya Kapovich \cite{kapo} proved that if finite graph of hyperbolic groups is acylindrically hyperbolic and satisfies
quasi-isometrically embedded condition then fundamental group of graph of groups is hyperbolic and vertex groups are quasiconvex
in the fundamental group. This was generalized to finite graph of relatively hyperbolic groups by Dahmani \cite{dah}.
Martin in \cite{martin} generalized Ilya Kapovich's theorem for finite complex of hyperbolic groups. 

An example to our interest can be constructed from Osin and Minasyan's work \cite{osin}.
 For instance,  let $M$ be a $3$-manifold
and $S$ be a punctured torus embedded in $M$. Suppose $M$ splits over $S$ with $M\setminus S$ having two components.
Suppose $G$ is fundamental group of $M$, $A,B$ are fundamental groups of components and $C$ is the fundamental group of punctured torus.
Then $G=A*_CB$. Let $C_1$ be the (cyclic) peripheral  subgroup of $C$ with respect to which $C$ is hyperbolic relative to $C_1$.
Now if $A$ is hyperbolic relative to the subgroup $C_1$ and $C$ is relatively quasiconvex in $A$.
Then $G$ is acylindrically hyperbolic. Further, if $B$ is hyperbolic relative to $C_1$
and $C$ is fully quasiconvex in both $A,B$ then $G$ is hyperbolic relative to the collection of conjugates of $C_1$ in $G$.

We will adapt the strategies followed by Dahmani and Martin to prove the main theorem which is as follows
\begin{itemize}
\item
 In our case local groups are relatively hyperbolic. In order to get
a hyperbolic space on which the relatively hyperbolic group acts properly discontinuously, we will attach `combinatorial horoballs'
to each cosets of the peripheral subgroup. The resulting space is called Augmented space (See \ref{ch},\ref{aug space}, \ref{GM}).\\
We will construct a complex of spaces, $EG$ (reps. boundary, $\partial G$) gluing the augmented spaces(see Definition \ref{aug space}) 
(resp. Bowditch boundaries) of the local groups similar to Martin's paper\cite{martin}.  
In Martin's paper local spaces are hyperbolic spaces on which local groups 
acts properly discontinuously and cocompactly. Here, we will take local spaces as augmented spaces and
use the fact that relatively hyperbolic group acts on Bowditch boundary by convergence action. 
The topology defined on $EG \cup \partial G$ in \cite{martin}
will work in our case and it will make $EG \cup \partial G$, a compact metrizable space.

\item
Next we will prove that the action of $G$ on $\partial G$ is by convergence action. 
Since there are parabolic limit points in boundaries of local groups, we have to modify the  proofs in \cite{martin} to work in our case.
\item
Lastly we will show that all the limit points for this convergence action is either conical or bounded parabolic. Then by the Theorem \ref{yaman thm}(due to Yaman, \cite{yaman}), $G$ will be relatively hyperbolic. 
  
\end{itemize}

In section \ref{RHCA}, we will give several definitions of relatively hyperbolic groups due to Farb, Grooves \& Manning and Bowditch.
Convergence action, fully quasiconvex subgroups , convergence property and finite intersection properties are given in this section.
 Complex of groups is described in section \ref{CS} and in the subsequent section \ref{EG}, the construction
of boundary $\partial G$ of fundamental group $G$ of complex of groups $\mathcal{G(Y)}$ is provided. In section \ref{convergence} and \ref{MT},
we will prove Theorem \ref{main theorem}.\\ \\
\textbf{Acknowledgement:} We are grateful to the anonymous reviewer for carefully reading the manuscript and helping to improve the exposition of this article.

\section{Preliminaries on Relative Hyperbolicity}\label{RHCA}

\subsection{Relative Hyperbolicity}
Relatively hyperbolic groups were first introduced by Gromov 
\cite{gro} to study hyperbolic manifolds with cusps. It was then studied by several people, we refer to the article \cite{hrus} by 
Hruska for several equivalent notions of relatively hyperbolic groups. For our purpose, we will require three equivalent definitions of 
relative hyperbolicity due to Farb \cite{farb}, Bowditch \cite{bow} and Groves \& Manning \cite{groves}. 
\begin{defn}(Hyperbolic Metric Space)\label{geod hyp}
  Let $\delta\geq 0$. We say that a geodesic triangle $\Delta$ is $\delta$-\textit{slim } in a geodesic metric space
 if any side of the triangle $\Delta$ is contained in the $\delta$- neighbourhood of the union of the other two sides.
A geodesic metric space is said to be $\delta$-\textit{hyperbolic} if all the triangles are $\delta$-slim. 
A geodesic metric space is said to be hyperbolic if it is $\delta$-hyperbolic for some $\delta\geq 0$.
\end{defn}

First we give the definition of relative hyperbolicity due to Farb. Let $G$ be a finitely generated group and $H$ be a finitely generated subgroup of it.
Also let $\Gamma_G$ be the Cayley graph of $G$.

\begin{defn}{(Coned-off Cayley Graph, \cite{farb})}
 The coned-off Cayley graph of $G$ w.r.t. $H$, denoted by $\widehat{\Gamma_G}$, 
is obtained from $\Gamma_G$ by adding an extra vertex $v(gH)$ for each left coset of $H$ in $G$ and an
extra edge $e(gh)$ of length 1/2 joining each $gh\in gH$ to $v(gH)$.
\end{defn}
Given a path $\gamma$ in $\Gamma_G$, the inclusion $\Gamma_G\rightarrow \widehat{\Gamma_G}$, gives a path $\tilde{\gamma}$ 
(after removing backtracks and loops of length 1) in $\widehat{\Gamma_G}$. If $\tilde{\gamma}$ goes through some $v(gH)$, then we 
say $\gamma$ penetrates $gH$. We call $\gamma$ to be a relative $k$-quasi geodesic if $\tilde{\gamma}$ is a $k$-quasi geodesic in $\widehat{\Gamma_G}$.
Also $\gamma$ is said to be a path without backtracking if after going through a cone point $v(gH)$ it never return to $gH$.

\begin{defn}{(Bounded Coset Penetration Property,\cite{farb})}
$(G,H)$ is said to have bounded coset penetration property if for each $k>1$ there exists $c(k)>0$ such that for any two relative $k$-quasi geodesics $\gamma_{1}, \gamma_{2}$ in $\Gamma_{G}$ with $d_{\Gamma_{G}}(\gamma_{1}, \gamma_{2})\leq 1$, the following holds,

(1) if $\gamma_{1}$ penetrates $gH$ but $\gamma_{2}$ does not then $\gamma_{1}$ travels at most $c(k)$ distance in $gH$.
\end{defn}

(2) if both $\gamma_{1}, \gamma_{2}$ penetrates $gH$ then the entry points as well as the exit points of the paths are $c(k)$ close to each other in $\Gamma_{G}$.
\begin{defn}{(B.Farb,\cite{farb})}
Let $G$ be a finitely generated group and $H$ be a finitely generated subgroup of it. $G$ is said to strongly hyperbolic relative to $H$ if $\widehat{\Gamma_G}$ is hyperbolic and $(G,H)$ satisfy bounded coset penetration property.
\end{defn}
The next definition by Bowditch gives a dynamical characterization of relative hyperbolicity which we will essentially use to prove the main theorem.
For that we need the notion of convergence group.

\begin{defn}{(Convergence Group)}
Let $G$ acts on compact metrizable space $M$. The action is called  convergence group action if 
for any sequence $\{g_{n}\}$ in $G$, there exists a subsequence $\{g_{\phi(n)}\}$ and $\xi^{+},\xi^{-}\in M$ such that $g_{\phi(n)}(K)$ 
converges uniformly to $\xi^{+}$, for all compact sets  $K\subset M\backslash \{\xi^{-}\}$.
\end{defn}

\begin{defn}\begin{enumerate}
 \item {(Bounded Parabolic Limit Points)}
An element $g\in G$ is called parabolic if it fixes exactly one point of $M$ and the corresponding fixed point $\xi$(say) is said to be
parabolic limit point. Furthermore, a parabolic limit point is said to be bounded parabolic if $Stab(\xi)$ acts properly discontinuously 
and cocompactly on $M\backslash \{\xi\}$.
\item {(Conical Limit Point)} Let $G$ has a convergence action on $M$. A point $\xi\in M$ is said to be conical limit point if there exists a sequence 
$\{g_{n}\}$ and $\xi^{+}\neq\xi^{-}\in M$ such that 
$g_{n}\xi \rightarrow \xi^{+}$, $g_{n}\xi^{'} \rightarrow \xi^{-}$ for all $ \xi^{'}\in M\backslash \{\xi\}$.
\item {(Geometrically Finite Action)} Let $G$ has a convergence action on a compact metrizable space $M$. The action is said to be 
geometrically finite if the limit points are either conical or bounded parabolic.
             
     \end{enumerate}

\end{defn}
Next we give the Bowditch's definition of Relative Hyperbolicity.

\begin{defn}{(Bowditch,\cite{bow})}\label{bowditch}
Let $G$ be finitely generated group and $\PP$ be a finite collection of finitely generated subgroups of it. 
$G$ is said to hyperbolic relative to $\PP$ if it acts properly discontinuously on a proper hyperbolic metric space $\widetilde{\Gamma}$ such that
\begin{itemize}
  \item $G$ acts on $\partial \widetilde{\Gamma}$ by convergence and geometrically finite action.
  \item the conjugates of the elements of $\PP$ are precisely the maximal parabolic subgroups.
\end{itemize}
we call $\partial \widetilde{\Gamma}$ the Bowditch boundary of $G$.
\end{defn}

Note that $\widehat{\Gamma_G}$ is locally infinite and the action of $G$ on it, is not properly discontinuous unless $H$ is finite. 
Groves \& Manning have defined a proper metric space by gluing combinatorial horoballs along parabolic subgroups and their translates, 
similar to coned-off Cayley graph and it is called Augmented space. Also $G$ acts on its augmented space properly discontinuously by isometries.
Let $G$ be finitely generated group and $\PP$ be a finite collection of subgroups of it. 
Let $\SSS{}$ be a finite  generating set of $G$ such that $\big<\SSS{}\cap P\big> = P$  for all $P\in\PP$ and 
$\Gamma_G$ be the Cayley graph of $G$ with respect to  $\SSS{}$.

\begin{defn}\label{ch}{(Combinatorial Horoballs, \cite{groves})}
Let $C$ be a 1-complex with $0$-skeleton $C^{0}$ and $1$-skeleton $C^1$.  We will construct a 1-complex $\HH{}(C)$ following ways:
\begin{itemize}
 \item $0$-skeleton of $\HH(C)$, $\HH{}(C)^{(0)}:= C^{(0)}\times (\{0,1,2,...\}) $,
 \item $1$-skeleton of $\HH(C)$,
$\HH{}(C)^{(1)}:=\big\{[(v,0),(w,0)] : v,w\in C^{(0)},
[v,w]\in C^{(1)}\big\}\cup \big\{[(v,k),(w,k)] : v,w\in C^{(0)}, k>0, d_{C}(v,w)\leq 2^{k} \big\}\cup \big\{[(v,k),(v,k+1)] : v\in C^{(0)},
k\geq 0\big\}$.
\end{itemize}
\end{defn}

\begin{defn}{(Augmented Space, \cite{hrus})}\label{aug space}
Let $G,\PP, \SSS{}$ be as mentioned above. Also let $\TT$ be the set of representative for distinct cosets 
of all $P\in \PP$. The Cayley graph of $P$ with respect to $P\cap \SSS{}$ embedded in $\Gamma_G$ as a subcomplex.
Let $\Gamma_{t}$, $t\in \TT$, be the translates of these subcomplexes. We define

$$\Gamma^{h}_G:=\Gamma_G\cup \big( \cup_{t\in\TT}(\HH(\Gamma_{t}))\big)\big /\simeq$$
as augmented space, where $\HH(\Gamma_{t})\times\{0\}$'s are identified to subcomplexes $\Gamma_{t}$.

\end{defn}
\begin{defn}\label{GM}{(Groves \& Manning,\cite{groves})}
$G$ is said to hyperbolic relative to $\PP$ if the augmented space $\Gamma^{h}_G$ is hyperbolic for any appropriate choice of $\SSS{}$.
\end{defn}

\begin{rem}
Due to equivalence of these  definitions  we can take $\Gamma^{h}$ as $\tilde{\Gamma}$ and
$\partial\Gamma^{h}$ will be Bowditch boundary.
\end{rem}

Next we will state a theorem due to A. Yaman which is a generalization of Bowditch's result on characterization of hyperbolic groups\cite{bow2}.

\begin{thm}\label{yaman thm}(A.Yaman, \cite{yaman})
Let $G$ has a geometrically finite action on a perfect metrizable compact space $M$ and $\PP{}$ be the collection of maximal parabolic subgroups. 
Also let every parabolic subgroup be finitely generated and there are only finitely many orbits of bounded parabolic points. 
Then $G$ is hyperbolic relative to $\PP{}$ and $M$ is equivariantly homeomorphic to its Bowditch boundary. 
\end{thm}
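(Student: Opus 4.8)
\textbf{Proof proposal for Theorem \ref{yaman thm}.}

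The plan is to recover a proper hyperbolic metric space on which $G$ acts and whose boundary realizes $M$, so that Bowditch's definition (Definition \ref{bowditch}) is met. Since we are given abstractly a geometrically finite convergence action on a perfect compact metrizable space $M$, with $\PP$ the maximal parabolic subgroups (each finitely generated) and only finitely many orbits of bounded parabolic points, the heart of the matter is to build geometry out of this purely topological/dynamical data. The standard route, which I would follow, is to manufacture a locally finite hyperbolic graph (a ``cusped'' or ``fine'' graph) directly from the dynamics, show that $G$ acts on it properly with the right quotient structure, and identify its Gromov boundary $G$-equivariantly with $M$.

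First I would set up the \emph{space of distinct triples}. Let $\Theta(M) = \{(x,y,z)\in M^3 : x,y,z \text{ distinct}\}$ with the diagonal $G$-action. A basic consequence of the convergence property is that this action is properly discontinuous, and geometric finiteness gives that it is also \emph{cocompact modulo the thin part}: after removing suitable horoball-like neighbourhoods of the bounded parabolic points, the quotient of $\Theta(M)$ is compact. I would use the finitely-many-orbits hypothesis on bounded parabolic points, together with the fact (from bounded parabolicity, see Definition of bounded parabolic limit points) that each $\mathrm{Stab}(\xi)$ acts cocompactly on $M\setminus\{\xi\}$, to control each cusp. The next step is to pass from this noncompact quotient to a \emph{connected, locally finite graph} $K$ with a cocompact isometric $G$-action: one chooses a $G$-invariant, locally finite family of ``horoballs'' around the parabolic points, builds a graph whose thick part quotient is compact and whose cusps are combinatorial horoballs in the sense of Definition \ref{ch}, attached along the (finitely many orbits of) parabolic stabilizers $P\in\PP$. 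This is exactly the augmented-space picture of Definition \ref{aug space}, assembled intrinsically rather than from a Cayley graph.

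Next I would prove that $K$ is Gromov hyperbolic and that $\partial K \cong M$ equivariantly. Hyperbolicity follows from a convergence-action criterion: a properly discontinuous cocompact-on-triples convergence action produces a uniform thin-triangles condition on the associated graph, because the dynamical convergence of sequences $g_n$ to $(\xi^+,\xi^-)$ translates into coarse contraction properties of geodesics, and the combinatorial horoballs glued in along parabolics are themselves hyperbolic with the parabolic point as their unique boundary point. I would then construct the comparison map $\partial K \to M$ by sending an equivalence class of geodesic rays to the point of $M$ to which its endpoints converge under the dynamics, check it is a continuous $G$-equivariant bijection between compact Hausdorff spaces (hence a homeomorphism), and verify it matches conical points of $\partial K$ with conical points of $M$ and parabolic fixed points with parabolic fixed points. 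Finally, properness of the $G$-action on $K$ comes from proper discontinuity on triples together with the local finiteness of $K$, and the parabolic subgroups are identified as the maximal stabilizers of points at which the action fails to be cocompact; this yields precisely the two bullet conditions of Definition \ref{bowditch}, so $G$ is hyperbolic relative to $\PP$ and $M$ is its Bowditch boundary.

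The hard part will be the construction of the locally finite hyperbolic graph $K$ with a proper cocompact-on-the-thick-part $G$-action purely from the dynamics, and in particular the \emph{uniformity} of the horoball neighbourhoods across the finitely many parabolic orbits; equivalently, proving the thin-triangles inequality in $K$ directly from the convergence property is the technical crux, since one must convert asymptotic dynamical statements into coarse metric estimates that are uniform over all of $K$. Once hyperbolicity of $K$ is established, identifying $\partial K$ with $M$ and checking geometric finiteness of the boundary action is comparatively formal, relying on the standard dictionary between conical/bounded-parabolic limit points and the coarse geometry of $K$ and its cusps.
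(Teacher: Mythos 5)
This statement is not proved in the paper at all: it is Yaman's theorem, imported verbatim from \cite{yaman} and used as a black box in the proof of Theorem \ref{main theorem} (the paper only remarks that the finiteness hypothesis on orbits of parabolic points can be relaxed via Tukia \cite{tukia}). So there is no in-paper proof to compare yours against; the honest comparison is with Yaman's original argument.

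Measured against that, your proposal is a reasonable roadmap, and it does track the actual strategy (Bowditch's topological characterization of hyperbolic groups, as adapted by Yaman to the relative case): pass to the space of distinct triples, use proper discontinuity of the diagonal action, cut off cusps at the bounded parabolic points, assemble a space with combinatorial horoballs over the finitely many parabolic orbits, prove it hyperbolic, and identify its boundary equivariantly with $M$. But as a proof it has a genuine gap, which you name yourself: the construction of the candidate space and the proof of its hyperbolicity \emph{purely from the dynamical data} is not a verification one can defer --- it is the entire content of Yaman's paper, which proceeds by building an annulus system (quasiconformal structure) on $M$, deriving from it a hyperbolic path quasimetric on triples, and only then extracting the space with the required properties. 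Two specific steps in your sketch would also fail as stated. First, the ``convergence-action criterion'' you invoke --- that a properly discontinuous, cocompact action on triples yields thin triangles --- is precisely Bowditch's characterization of ordinary (non-relative) hyperbolicity; it does not apply here because with parabolics present the action on distinct triples is \emph{not} cocompact, and the horoball modification you propose to restore compactness is the technical crux, not a patch. Second, the graph arising in this setting is fine rather than locally finite, and properness of the $G$-action requires the correct realization of the cusped space, so your concluding deduction of the two conditions of Definition \ref{bowditch} also needs reworking. In short: right skeleton, but the substance of the theorem is missing.
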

 
We can omit the finiteness of the set of orbits of parabolic points by a theorem of Tukia(\cite{tukia}, Theorem 1B). 
As discussed in the introduction we will use this characterization of relative hyperbolicity to prove the main theorem.

\subsection{Fully quasi-convex subgroup}

Fully quasi-convex subgroups of relatively hyperbolic group were introduced by Dahmani in \cite{dah}. 
It is a generalization of quasi-convex subgroups of hyperbolic group in the sense that it satisfies \textit{limit set property, 
convergence property and finite intersection (finite height) property} which is not in general true for quasi-convex subgroup of
relatively hyperbolic group. The definition of fully quasi-convex subgroups, Remark \ref{quasiconvex-parabolic} and Theorems \ref{limset}
and \ref{conv} are taken from \cite{dah}. We refer to section 1.2 of \cite{dah} for proofs.

\begin{defn} (Dahmani, \cite{dah})
Let $G$ be a relatively hyperbolic group with Bowditch boundary $\partial G$. A subgroup $H$ of $G$ is called quasi-convex if 
$H$ has a geometrically finite action on $\Lambda H$. It is called fully quasi-convex if for any infinite sequence $\{g_{n}\}$, 
all comes from distinct cosets of $H$,  $\bigcap_{n}(g_{n}\Lambda H)$ is empty. 

\end{defn}

\begin{rem}
If $H$ is fully quasiconvex, then $gHg^{-1}$ is also fully quasi-convex, for all $g\in G$.

\end{rem}

\begin{rem}{\cite{dah}}\label{quasiconvex-parabolic}
 Let $G$ be a relatively hyperbolic group. If $H$ is fully quasi-convex in $G$, 
 then each parabolic point for $H$ in $\Lambda(H)$ is a parabolic point for $G$ in $\partial G$ and if $P$ is 
 the corresponding maximal parabolic subgroup in $G$ then the corresponding maximal parabolic subgroup in $H$ is precisely $P\cap H$. 
\end{rem}
 The following two properties of fully quasi-convex subgroups are proved by F. Dahmani \cite{dah}. 
 
\begin{thm}(Limit set property,\cite{dah})\label{limset}
Let $H_{1}$ and $H_{2}$ are fully quasi-convex in $G$ then $H_{1}\bigcap H_{2}$ is fully quasi-convex.
Moreover $\Lambda (H_{1}\bigcap H_{2})= \Lambda H_{1}\bigcap \Lambda H_{2}$.
\end{thm}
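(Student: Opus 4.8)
The plan is to prove the limit set property for fully quasi-convex subgroups $H_1, H_2$ of a relatively hyperbolic group $G$. The statement has two parts: first, the equality $\Lambda(H_1 \cap H_2) = \Lambda H_1 \cap \Lambda H_2$ of limit sets; second, that $H_1 \cap H_2$ is again fully quasi-convex. I would establish the limit set equality first, since the full quasi-convexity of the intersection should follow from it together with the definitions.

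For the inclusion $\Lambda(H_1 \cap H_2) \subseteq \Lambda H_1 \cap \Lambda H_2$, I would observe this is essentially immediate from monotonicity: since $H_1 \cap H_2 \subseteq H_i$, any limit point of the subgroup $H_1 \cap H_2$ is a limit point of each $H_i$, giving containment in the intersection of the limit sets. The substantive direction is $\Lambda H_1 \cap \Lambda H_2 \subseteq \Lambda(H_1 \cap H_2)$. Here I would take a point $\xi \in \Lambda H_1 \cap \Lambda H_2$ and produce sequences in $H_1$ and in $H_2$ witnessing $\xi$ as a limit point; the goal is to extract from these a single sequence lying in the intersection $H_1 \cap H_2$ that still converges to $\xi$. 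The natural tool is the convergence action of $G$ on $\partial G$ together with the full quasi-convexity hypothesis: full quasi-convexity says that for an infinite family of distinct cosets $g_n H$, the sets $g_n \Lambda H$ have empty common intersection, which controls how limit sets of translates can accumulate.

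The main obstacle, and the heart of the argument, will be the cocompactness and convergence bookkeeping needed to guarantee that the witnessing sequence can be chosen inside $H_1 \cap H_2$ rather than merely inside each factor separately. I would exploit the finite intersection (finite height) property implicit in full quasi-convexity: if $\xi$ lies in $\Lambda H_1 \cap \Lambda H_2$, then $\xi$ lies in $\Lambda H_1 \cap h\Lambda H_2$ for the relevant coset translates, and the finiteness forced by the full quasi-convexity condition restricts these to finitely many configurations, allowing one to pass to a coset of $H_1 \cap H_2$ on which $\xi$ is genuinely a limit point. The case distinction between $\xi$ being a conical limit point and $\xi$ being a (bounded) parabolic limit point will need separate handling: in the parabolic case I would invoke Remark \ref{quasiconvex-parabolic}, which identifies the maximal parabolic subgroup for $H_i$ as $P \cap H_i$, so that the parabolic stabilizer for the intersection becomes $P \cap H_1 \cap H_2$, closing the argument.

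Finally, having established the limit set equality, I would verify that $H_1 \cap H_2$ is fully quasi-convex. For the quasi-convexity part I would check that $H_1 \cap H_2$ acts geometrically finitely on $\Lambda(H_1 \cap H_2)$, using that this limit set now equals $\Lambda H_1 \cap \Lambda H_2$ and that each factor already acts geometrically finitely on its own limit set; conical and bounded parabolic points of the intersection are inherited from the factors. For the defining emptiness condition, given distinct cosets $g_n(H_1 \cap H_2)$ I would argue that $\bigcap_n g_n \Lambda(H_1 \cap H_2) = \bigcap_n \big(g_n \Lambda H_1 \cap g_n \Lambda H_2\big)$ is empty, reducing via the limit set equality to the full quasi-convexity of $H_1$ and $H_2$ applied to the induced coset sequences. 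The delicate point here is again ensuring that distinctness of the cosets $g_n(H_1 \cap H_2)$ yields distinctness of enough of the cosets $g_n H_i$ to apply the hypothesis, which is where the finite intersection property is used.
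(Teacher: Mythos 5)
The paper itself gives no proof of Theorem \ref{limset}: the statement is quoted from Dahmani, and the reader is explicitly referred to Section 1.2 of \cite{dah} for the argument, so there is no in-paper proof to compare against. Your overall architecture (easy inclusion by monotonicity, hard inclusion split into conical and parabolic cases, then deduction of full quasi-convexity of $H_1\cap H_2$) does match the structure of Dahmani's proof, and your treatment of the final emptiness condition --- distinctness of the cosets $g_n(H_1\cap H_2)$ forces the pairs $(g_nH_1,g_nH_2)$ to be distinct, so infinitely many cosets of some $H_i$ are distinct and the full quasi-convexity of that $H_i$ kills the intersection --- is essentially correct.

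However, the hard inclusion $\Lambda H_1\cap\Lambda H_2\subseteq\Lambda(H_1\cap H_2)$ is where all the work lies, and your sketch does not contain a workable mechanism for either case. For $\xi$ conical, convergence dynamics together with the finite intersection property cannot by themselves manufacture an infinite sequence in $H_1\cap H_2$ accumulating at $\xi$; the needed argument is geometric. One takes a (quasi)geodesic ray to $\xi$ in the augmented (or coned-off) space; relative quasi-convexity of each $H_i$ keeps the ray in a bounded neighbourhood of both subgroups, conicality forces the ray to spend unbounded time outside the horoballs, and local finiteness there plus a pigeonhole on the boundedly many values of $h_n^{-1}k_n$ (with $h_n\in H_1$, $k_n\in H_2$ near the same point of the ray) produces elements $h_nh_m^{-1}=k_nk_m^{-1}\in H_1\cap H_2$ tending to $\xi$. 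Nothing in your proposal plays this role, and the phrase about ``finitely many configurations'' does not substitute for it. For $\xi$ parabolic with maximal parabolic $P=\mathrm{Stab}_G(\xi)$, invoking Remark \ref{quasiconvex-parabolic} only identifies $P\cap H_i$ as the relevant parabolic subgroup of $H_i$; to conclude $\xi\in\Lambda(H_1\cap H_2)$ you must show $P\cap H_1\cap H_2$ is infinite, and the missing step is that full quasi-convexity applied to the coset family $\{pH_i:p\in P\}$ (all of whose translated limit sets contain $\xi$) forces $[P:P\cap H_i]<\infty$, whence $P\cap H_1\cap H_2$ has finite index in the infinite group $P$. Without these two ingredients the central inclusion, and hence also the geometric finiteness of the action of $H_1\cap H_2$ on its limit set, is not established.
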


\begin{thm}(Convergence property,\cite{dah})\label{conv}
Let $G$ be a relatively hyperbolic group and $H$ be a fully quasi-convex subgroup in it. Let $\{g_{n}\}$ be 
a sequence of elements in $G$ all comes from distinct cosets of $H$. Then there exists a subsequence $\{g_{\phi (n)}\}$ 
such that $g_{\phi (n)}\Lambda H$ uniformly converges to a point.
\end{thm}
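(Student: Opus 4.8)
The plan is to combine the convergence action of $G$ on its Bowditch boundary $\partial G$ with the defining empty-intersection property of full quasi-convexity. Throughout, recall that $\partial G$ is compact metrizable, that $\Lambda H$ is a closed (hence compact) $H$-invariant subset, and fix a compatible metric $d$ on $\partial G$. We may assume $H$ is infinite, since otherwise $\Lambda H=\emptyset$ and the statement is vacuous; the case $|\Lambda H|\le 1$ is immediate, because then each $g_n\Lambda H$ is a single point and compactness of $\partial G$ already yields a convergent subsequence. Since the cosets $g_nH$ are distinct, the $g_n$ are distinct elements, so $\{g_n\}$ leaves every finite subset of $G$; applying the convergence action to this sequence I would first extract a subsequence, still denoted $\{g_n\}$, together with points $\xi^+,\xi^-\in\partial G$ such that $g_n\to\xi^+$ uniformly on compact subsets of $\partial G\setminus\{\xi^-\}$ (and correspondingly $g_n^{-1}\to\xi^-$ off $\xi^+$).

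Second, passing to a further subsequence I would let $g_n\Lambda H$ converge in the Hausdorff metric on the (compact) hyperspace of closed subsets of $\partial G$ to a closed set $K$. Choosing any $a\in\Lambda H$ with $a\neq\xi^-$ (possible as $|\Lambda H|\ge 2$) gives $g_na\to\xi^+$, so $\xi^+\in K$; the whole theorem then reduces to proving $K=\{\xi^+\}$, i.e. that $g_n\Lambda H$ uniformly converges to the attracting point $\xi^+$.

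The argument splits according to whether the repelling point $\xi^-$ lies in $\Lambda H$. If $\xi^-\notin\Lambda H$, then $\Lambda H$ is a compact subset of $\partial G\setminus\{\xi^-\}$, so uniform convergence on compacta gives $g_n\Lambda H\to\xi^+$ and $K=\{\xi^+\}$ immediately. Note that this dichotomy is intrinsic to the coset: replacing $g_n$ by $g_nh$ with $h\in H$ changes neither $g_n\Lambda H$ (by $H$-invariance of $\Lambda H$) nor the membership of the new repelling point $h^{-1}\xi^-$ in the $H$-invariant set $\Lambda H$, so the favourable case cannot be forced merely by re-choosing coset representatives.

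The main obstacle is therefore the case $\xi^-\in\Lambda H$, where the part of $\Lambda H$ clustering at the repelling point need not be pushed towards $\xi^+$. Here I would argue by contradiction using full quasi-convexity. Suppose $K\neq\{\xi^+\}$; then for some $\delta>0$ there are points $a_n\in\Lambda H$ with $d(g_na_n,\xi^+)\ge\delta$, and after a further subsequence $a_n\to a$ and $g_na_n\to c\neq\xi^+$. If $a\neq\xi^-$, a compact neighbourhood of $a$ avoiding $\xi^-$ would force $g_na_n\to\xi^+$, contradicting $d(g_na_n,\xi^+)\ge\delta$; hence $a=\xi^-$, so the offending points accumulate exactly at $\xi^-\in\Lambda H$. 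The heart of the proof is to convert this persistent ``non-shrinking'' into a genuine common limit point: passing to the conjugate subgroups $g_nHg_n^{-1}$, whose limit sets are precisely $g_n\Lambda H$, one shows that the failure of $g_n\Lambda H$ to collapse to $\xi^+$ produces a point lying in infinitely many of the translates $g_n\Lambda H$, equivalently contradicting the finite-height reformulation of full quasi-convexity obtained from the empty-intersection hypothesis by a compactness argument on $\partial G$. This contradicts $\bigcap_n g_n\Lambda H=\emptyset$, forcing $K=\{\xi^+\}$. I expect the technical crux to be exactly this conversion—controlling the images $g_n(\Lambda H\cap V)$ of the cluster of $\Lambda H$ near $\xi^-$ and extracting from them an honest common point rather than a mere Hausdorff-limit point—for which one exploits the geometrically finite structure of the $H$-action (the conical or bounded-parabolic nature of $\xi^-$ in $\Lambda H$) to organise $\Lambda H$ near $\xi^-$.
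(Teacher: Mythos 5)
First, a point of reference: the paper does not actually prove this theorem --- it imports it from Dahmani, explicitly referring to Section 1.2 of \cite{dah} for the proof. So your proposal has to be measured against Dahmani's argument. Your first half is correct and standard: the trivial cases, the extraction of $\xi^{+},\xi^{-}$ from the convergence action, the immediate conclusion when $\xi^{-}\notin\Lambda H$, and the localisation of any failure of uniform convergence at points $a_n\to\xi^{-}\in\Lambda H$. But the proposal stops exactly where the real proof begins. In the case $\xi^{-}\in\Lambda H$ you only assert that the failure of $g_n\Lambda H$ to collapse ``produces a point lying in infinitely many of the translates $g_n\Lambda H$'', and you yourself flag this conversion as the crux you expect to hold. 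That step is a genuine gap, not a routine compactness argument: Hausdorff accumulation of the sets $g_n\Lambda H$ at two points does not yield a common intersection point (the singletons $\{1/n\}$ in $[0,1]$ Hausdorff-converge to $\{0\}$ yet have empty total intersection), so the empty-intersection hypothesis of full quasi-convexity cannot be invoked directly on a Hausdorff limit. Closing the gap needs genuinely new input --- either a geometric argument (by Lemma \ref{subgroup qc} the translates $g_n\Gamma^{h}_{H}$ are uniformly quasi-convex in the proper space $\Gamma^{h}_{G}$, so two-point accumulation forces each $g_n\Gamma^{h}_{H}$ to pass through a fixed ball, and pigeonhole on vertices puts two $g_n$ in the same coset, contradicting distinctness), or Dahmani's dynamical argument, which re-chooses the coset representatives using the geometrically finite action of $H$ on $\Lambda H$ at $\xi^-$ so as to reduce to the easy case. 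Neither is carried out, nor even correctly described, in your proposal.

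Moreover, the one substantive claim you do make about the hard case is false, and it forecloses the strategy that actually works. You argue that the dichotomy ``$\xi^{-}\in\Lambda H$ or not'' is intrinsic to the coset because replacing $g_n$ by $g_nh$ moves the repelling point to $h^{-1}\xi^{-}\in\Lambda H$. That computation is valid only for a \emph{fixed} $h$ (or for $h_n$ whose attracting point differs from $\xi^-$); when representatives $g_nh_n$ are re-chosen with $h_n$ varying, the repelling point of the extracted sequence is not $h_n^{-1}\xi^{-}$ and can leave $\Lambda H$. Concretely, in $G=F(x,y)$ with $H=\langle x\rangle$, take $g_n=y^nx^{-n}$: the cosets $g_nH=y^nH$ are distinct and the repelling point of $(g_n)$ is $x^{\infty}\in\Lambda H$, yet $h_n=x^{n}$ gives $g_nh_n=y^{n}$, whose repelling point $y^{-\infty}$ lies outside $\Lambda H$. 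Re-choosing representatives so that the repelling point avoids $\Lambda H$ --- via the conical/bounded-parabolic structure of $\xi^-$ for $H\curvearrowright\Lambda H$, or via shortest-in-coset representatives in the augmented space --- is precisely how the cited proof reduces the hard case to the easy one; your remark incorrectly rules this out, and your proposal offers no working substitute.
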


\begin{lem}\label{subgroup qc}
Let $H$ be a finitely generated fully quasi convex subgroup of 
finitely generated relatively hyperbolic group $G$. Then $\Gamma^{h}_{H}$ is quasi convex in $\Gamma^{h}_{G}$.
\end{lem}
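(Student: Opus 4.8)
The plan is to realize $\Gamma^{h}_{H}$ inside $\Gamma^{h}_{G}$ through a natural map, prove that this map is a quasi-isometric embedding, and then read off quasi-convexity from hyperbolicity of $\Gamma^{h}_{G}$ together with the stability of quasi-geodesics.

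First I would fix a finite generating set $\SSS$ of $G$ adapted to the data: choose $\SSS$ so that $\SSS\cap H$ generates $H$ and $\langle\SSS\cap P\rangle=P$ for every $P\in\PP$. By Remark \ref{quasiconvex-parabolic} the maximal parabolic subgroups of $H$ are exactly the intersections $P^{g}\cap H$, so the peripheral structure used to build $\Gamma^{h}_{H}$ (Definition \ref{aug space}) is $\{\,P^{g}\cap H\,\}$; by the limit set property (Theorem \ref{limset}) $H$ is itself relatively hyperbolic with this structure, so $\Gamma^{h}_{H}$ is hyperbolic by Definition \ref{GM}. With these choices $\Gamma_{H}$ embeds in $\Gamma_{G}$ as a subcomplex, and each combinatorial horoball $\HH(\Gamma_{P^{g}\cap H})$ of $\Gamma^{h}_{H}$ maps into the horoball $\HH(\Gamma_{P^{g}})$ of $\Gamma^{h}_{G}$ by the level-preserving rule $(x,k)\mapsto(x,k)$. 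Since a horizontal edge of $\HH(\Gamma_{P^{g}\cap H})$ at level $k$ joins points at $d_{\Gamma_{P^{g}\cap H}}$-distance at most $2^{k}$ and $d_{\Gamma_{P^{g}}}\le d_{\Gamma_{P^{g}\cap H}}$, such an edge is carried to an edge (or a point) of $\HH(\Gamma_{P^{g}})$; hence the resulting map $\iota\colon\Gamma^{h}_{H}\to\Gamma^{h}_{G}$ is $1$-Lipschitz.

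The substance of the argument is the reverse inequality, namely that $\iota$ is a quasi-isometric embedding, equivalently that every geodesic of $\Gamma^{h}_{H}$ is carried to a quasi-geodesic of $\Gamma^{h}_{G}$ with uniform constants. Such a geodesic decomposes into arcs running in $\Gamma_{H}$ and excursions into the horoballs over the subgroups $P^{g}\cap H$. For an excursion I would use the distance estimate built into Definition \ref{ch}, that the distance in $\HH(\Gamma_{P^{g}\cap H})$ between two base points is comparable to $\log_{2}$ of their $\Gamma_{P^{g}\cap H}$-distance, and the analogous statement downstairs; these agree up to uniform constants once $P^{g}\cap H$ is at worst polynomially distorted in $P^{g}$. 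For the arcs in $\Gamma_{H}$ I would argue dynamically: if the $\Gamma^{h}_{G}$-geodesics joining points of $H$ could wander arbitrarily far from $\iota(\Gamma^{h}_{H})$, then translating back by elements of $H$ and passing to a subsequence would produce elements $g_{n}$ in pairwise distinct cosets of $H$ whose limit sets $g_{n}\Lambda H$ share a common point, contradicting the emptiness of $\bigcap_{n} g_{n}\Lambda H$ guaranteed by full quasi-convexity (and detected via the convergence property, Theorem \ref{conv}). With the Morse lemma in the proper $\delta$-hyperbolic space $\Gamma^{h}_{G}$ this would give the two-sided estimate $d_{\Gamma^{h}_{H}}(a,b)\asymp d_{\Gamma^{h}_{G}}(\iota a,\iota b)$.

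Granting that $\iota$ is a quasi-isometric embedding, quasi-convexity follows formally: given $p=\iota(a)$ and $q=\iota(b)$, the $\iota$-image of a $\Gamma^{h}_{H}$-geodesic from $a$ to $b$ is a quasi-geodesic in the $\delta$-hyperbolic space $\Gamma^{h}_{G}$, so by stability of quasi-geodesics the geodesic $[p,q]$ of $\Gamma^{h}_{G}$ lies in a uniform neighbourhood of that image, hence of $\iota(\Gamma^{h}_{H})$; thus $\iota(\Gamma^{h}_{H})$ is quasi-convex. I expect the genuine obstacle to be the quasi-isometric embedding, and within it the undistortedness of the peripheral intersections $P^{g}\cap H$ in $P^{g}$ together with the dynamical no-wandering statement. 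This is exactly where \emph{full} quasi-convexity --- through the limit set and convergence properties (Theorems \ref{limset} and \ref{conv}), rather than bare relative quasi-convexity --- must be converted into metric control; the remaining steps are then formal consequences of hyperbolicity of $\Gamma^{h}_{G}$.
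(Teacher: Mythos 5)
Your overall architecture --- embed $\Gamma^{h}_{H}$ into $\Gamma^{h}_{G}$, show the inclusion is a quasi-isometric embedding, and then invoke stability of quasi-geodesics in the hyperbolic space $\Gamma^{h}_{G}$ --- is a legitimate route and is genuinely different from the paper's, but as written the two load-bearing steps are asserted rather than proved, and the second one is circular. First, your horoball comparison hinges on the intersections $P^{g}\cap H$ being undistorted (you say ``at worst polynomially distorted'') in $P^{g}$; since horoball distance between base points is logarithmic in the base distance, exponential distortion would destroy the two-sided estimate, so this cannot be left as a side condition. It does in fact follow from \emph{full} quasi-convexity --- if a parabolic point $p$ with stabilizer $P$ lies in $\Lambda H$ and $[P:P\cap H]=\infty$, then coset representatives $g_{n}\in P$ of distinct cosets of $P\cap H$ lie in distinct cosets of $H$ and satisfy $p\in\bigcap_{n}g_{n}\Lambda H$, contradicting the definition, so $P\cap H$ has finite index in $P$ --- but you need to say this; it is not a consequence of Remark \ref{quasiconvex-parabolic} alone.

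Second, and more seriously, your ``dynamical no-wandering'' step is the entire content of the lemma and is not an argument. You propose to prove the lower bound $d_{\Gamma^{h}_{H}}(a,b)\lesssim d_{\Gamma^{h}_{G}}(a,b)$ by assuming that $\Gamma^{h}_{G}$-geodesics between points of $H$ cannot wander far from $\Gamma^{h}_{H}$ --- but that is precisely the quasi-convexity you are trying to establish, and the mechanism by which a wandering geodesic would produce infinitely many distinct cosets $g_{n}H$ with $\bigcap_{n}g_{n}\Lambda H\neq\emptyset$ is exactly the conversion of dynamics into metric control that you acknowledge is missing. The paper avoids this entirely and works directly with a $\Gamma^{h}_{G}$-geodesic $c$ between points of $\Gamma^{h}_{H}$: outside the horoballs it is controlled by Hruska's theorem that a relatively quasi-convex subgroup has $\Gamma_{H}$ quasi-convex in the coned-off Cayley graph $\widehat{\Gamma}_{G}$, together with the bounded coset penetration property to match entry and exit points of horoballs; inside a horoball it is controlled by Groves--Manning's Lemma 3.1, which says any horoball geodesic fellow-travels (within Hausdorff distance $4$) a path made of two vertical segments and one short horizontal segment, and such a path with endpoints near $\Gamma^{h}_{H}$ stays near $\Gamma^{h}_{H}$. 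If you want to keep your quasi-isometric-embedding framing, you should replace the dynamical step by this kind of concrete input (Hruska plus BCP plus the horoball normal form); otherwise the proof has a gap exactly where you predicted it would.
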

\begin{proof}
Let $H$ be generated by $S$ and extend the generating set to generate $G$. 
Then the corresponding augmented spaces $\Gamma^{h}_{H}$ will be a subgraph of $\Gamma^{h}_{G}$. 
Take two points $x,y$ in $\Gamma^{h}_{H}$ and join them by a geodesic $c$ in $\Gamma^{h}_{G}$.
Since $H$ is relatively quasi convex in $G$, by a theorem of Hruska\cite{hrus}, $\Gamma_H$ is quasi convex in $\widehat{\Gamma}_G$.
Let $\hhat c$ be the image of $c$ in $\hhat{\Gamma}_G$ after removing the backtracks and loops of length $1$.  
Take the projection of $\hhat c$ onto $\Gamma_H$ and call it $\tilde{c}$, hence $\tilde{c}$ is a quasi geodesic in $\widehat{\Gamma}_G$.
Note that image of $\hhat c$ and $c$ are same outside horoballs. Also Hausdorff distance between $\hhat c$ and $\tilde{c}$ 
is bounded outside horoballs and if they enter same horoballs then the distances between entry points, as well as exit points, are bounded 
in $\Gamma_{G}$ and so in $\Gamma^{h}_{G}$. So if we can prove for any geodesic in $\Gamma^{h}_{G}$ entirely
lies in a horoball with starting and ending points close to $H$ the geodesic is in bounded distance from $\Gamma^{h}_{H}$, then we are done.
By Lemma 3.1 of Grooves and Manning\cite{groves} any geodesic in a horoball tracks a geodesic consists of two vertical 
segments and one horizontal segment(Hausdorff distance is at most 4). Now a geodesic consists of two vertical segments and 
one horizontal segment with starting and ending points close to $H$ lie in a bounded neighbourhood of $\Gamma^{h}_{H}$. Hence we are done. 
\end{proof}

Next we will prove that there are finitely many conjugates of a fully quasi-convex subgroup which have infinite total intersection.

\begin{prop}(Finite intersection property)\label{finite}
Let $G$ be a relatively hyperbolic group and $H$ be a fully quasi-convex subgroup in it. 
Then there exists finitely many distinct left cosets $g_{1}H,g_{2}H...g_{m}H$ in $G$ for which 
$\bigcap\limits_{k=1}^m g_{k}Hg_{k}^{-1}$ is infinite.
\end{prop}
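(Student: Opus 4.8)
The plan is to reduce the statement to an assertion about limit sets in the compact Bowditch boundary $\partial G$ and then play compactness off against the defining property of full quasi-convexity. Throughout I use that the limit set is equivariant, $\Lambda(gHg^{-1}) = g\Lambda H$, and that a conjugate of a fully quasi-convex subgroup is again fully quasi-convex (the remark following the definition of full quasi-convexity).

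First I would record the algebraic reduction. Given distinct cosets $g_1 H,\dots,g_m H$, each $g_k H g_k^{-1}$ is fully quasi-convex, so iterating the Limit Set Property (Theorem \ref{limset}) shows that $K_m := \bigcap_{k=1}^m g_k H g_k^{-1}$ is fully quasi-convex with
\[
\Lambda K_m \;=\; \bigcap_{k=1}^m \Lambda\!\left(g_k H g_k^{-1}\right) \;=\; \bigcap_{k=1}^m g_k \Lambda H .
\]
The second ingredient is that an infinite fully quasi-convex subgroup has nonempty limit set: if $K_m$ is infinite, pick infinitely many distinct elements of $K_m$ and apply the convergence action of $G$ on the compact space $\partial G$ to produce an attracting point, which by definition lies in $\Lambda K_m$. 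Hence, whenever $\bigcap_{k=1}^m g_k H g_k^{-1}$ is infinite, we have $\bigcap_{k=1}^m g_k \Lambda H \neq \emptyset$.

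Now I would argue by contradiction. If there were no finite bound, one could produce an infinite sequence of distinct cosets $g_1 H, g_2 H,\dots$ for which every finite initial intersection $\bigcap_{k=1}^n g_k H g_k^{-1}$ is infinite; by the previous paragraph the closed subsets $g_k \Lambda H \subseteq \partial G$ then enjoy the finite intersection property. Since $\partial G$ is compact and each $g_k \Lambda H$ is closed, this forces $\bigcap_{k=1}^{\infty} g_k \Lambda H \neq \emptyset$. But the $g_k$ come from distinct cosets of $H$, so the definition of full quasi-convexity asserts precisely that $\bigcap_{k} g_k \Lambda H = \emptyset$, a contradiction. Thus the maximal number $m$ of distinct cosets with $\bigcap_{k=1}^m g_k H g_k^{-1}$ infinite is finite.

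The main obstacle is the extraction step: passing from ``arbitrarily large finite collections of distinct cosets with infinite total intersection'' to a single infinite sequence all of whose finite sub-intersections remain infinite. I would handle this by a K\"onig's lemma / tree-pruning argument, organizing the finite good collections into a finitely branching tree and extracting an infinite branch; alternatively, one reformulates the conclusion as the non-existence of such an infinite good sequence, so that its negation supplies the sequence directly and the compactness argument applies verbatim. The remaining points --- equivariance of limit sets, nonemptiness of the limit set of an infinite fully quasi-convex subgroup, and closedness of each $g_k \Lambda H$ in $\partial G$ --- are routine given Theorems \ref{limset} and \ref{conv} together with the compactness of $\partial G$.
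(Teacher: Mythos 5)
Your proposal is correct and follows essentially the same route as the paper: negate the statement to obtain an infinite sequence of distinct cosets, observe that the (infinite) intersection of conjugates has nonempty limit set contained in $\bigcap_{k} g_k\Lambda H$ (via $\Lambda(gHg^{-1})=g\Lambda H$ and the limit set property), and contradict the second condition in the definition of full quasi-convexity. The paper avoids your compactness/finite-intersection-property detour by working directly with the limit set of $\bigcap_{n\geq 1} g_nHg_n^{-1}$, and it likewise sidesteps the K\"onig's-lemma extraction step by reading the conclusion simply as the non-existence of an infinite sequence of distinct cosets with infinite total intersection.
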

\begin{proof}
If possible, let there exists a infinite sequence $\{g_{n}\}$ all comes from distinct cosets of $H$ such that 
$\bigcap\limits_{n=1}^{\infty} g_{n}Hg_{n}^{-1}$ is infinite, i.e. $\Lambda (\bigcap\limits_{n=1}^{\infty} g_{n}Hg_{n}^{-1})$ is non empty. 
But $\Lambda (\bigcap\limits_{n=1}^{\infty} g_{n}Hg_{n}^{-1})\subset\bigcap\limits_{n=1}^{\infty}\Lambda(g_{n}Hg_{n}^{-1}) $
and the fact that $\Lambda (gHg^{-1})=g \Lambda H$, we have $\bigcap\limits_{n=1}^{\infty}g_{n} \Lambda H$ is 
non empty which contradicts the second condition of fully quasi-convexity.

\end{proof}

\section{Background on Complex of Groups}\label{CS}

H. Bass and J.
P. Serre in \cite{trees} completely described the class of groups which act on trees without inversion.
Such groups are fundamental group of graph of groups. A. Haefliger in \cite{haefliger} generalized this theory to the class of groups 
acting on simplicial complexes and it is called complex of groups. In this section we will discuss the basics of complex of groups. 
For a detailed discussion on this topic we refer to \cite{brid}.

Let $Y$ be a simplicial complex. We will denote the set of simplices and set of vertices of $Y$ by $S(Y)$ and $V(Y)$ respectively.
Let $\YY$ be the scwol (refer to \cite{brid}) corresponding to the first Barrycentric subdivision of $Y$ and its directed edge 
set is denoted by $\EE^{\pm}(\YY{})$.
\subsection{Complex of Groups}
\begin{defn}{(Complex of Groups, \cite{brid})}
A simple complex of groups, $G(\mathcal{Y})$, over a simplicial complex $Y$ consists of 
\newline{(1) local groups $G_{\sigma}$ for each $\sigma \in S(Y)$}
\newline{(2) a monomorphism $\varphi_{\sigma,\sigma^{'}}:G_{\sigma^{'}}\rightarrow G_{\sigma}$ whenever $\sigma \subset \sigma^{'}$.}
\newline{(3) for $\sigma \subset \sigma^{'} \subset \sigma^{''}$,  
$\varphi_{\sigma,\sigma^{''}}=\varphi_{\sigma,\sigma^{'}} \circ \varphi_{\sigma^{'},\sigma^{''}}$ }
\end{defn}

\begin{defn}{(Fundamental Group of Complex of Groups,  \cite{brid})}\label{fundaG}
Let $T$ be a maximal tree in 1-skeleton of $\mathcal{Y}$. The Fundamental Group of $G(\mathcal{Y})$ with respect to $T$, 
denoted by $\pi_{1}(G(\mathcal{Y}),T)$, is generated by $\bigsqcup\limits_{\sigma\in S(Y)} G_{\sigma} \bigsqcup \mathcal{E}^\pm(\mathcal{Y})$ subject to  \\   
 (1) relations of $G_{\sigma}$,\\
 (2) $(a^{+})^{-1}=a^{-}, ({a}^{-})^{-1}=a^{+}$,\\
 (3) $(ab)^{+}=a^{+}b^{+}$\\
 (4) $a^{+}ga^{-}=\varphi_{a}(g)$, \\
 (5) $a^{+}=1 $ for all edge $a$ of $T$.
 
\end{defn}

In fact the above definition is independent of the choice of the maximal tree and we will call it $G$ in the subsequent sections.
There is a canonical morphism, $\iota_{T}: G(\YY{})\rightarrow G$ which takes $G_{\sigma}\rightarrow G$ and $a\mapsto a^{+}$.
The natural homomorphisms $G_{\sigma}\rightarrow G$ is injective if and only if the complex of groups $G(\YY)$ is developable.
For definition of developability, see Definition 2.11 of \cite{brid}.

Next we will define a CW complex on which $G$ will act naturally and the quotient space will be $Y$.
\begin{defn}{(Universal Covering, \cite{brid})}
We define the universal covering of $G(\YY{})$ associated to $\iota_{T}$ as
\begin{center}
$X:=\Bigg(G\times \coprod_{\sigma\in S(Y)}\sigma\Bigg)\bigg/ \simeq$
\end{center}
where $(g,i_{\sigma,\sigma^{'}}(x))\simeq (g\iota_{T}([\sigma,\sigma^{'}])^{-1},x)$, $[\sigma,\sigma^{'}]\in \EE{}(\YY{})$,
$i_{\sigma,\sigma^{'}}:\sigma^{'}\rightarrow \sigma$ is the embedding and $(gg^{'},x)\simeq (g,x)$, $g^{'}\in G_{\sigma},g\in G$.
\end{defn}
$G$ acts naturally on $X$ by left multiplication on the first factor.

\begin{defn}{(Acylindrical Action)}\label{acy act}
Let $K>0$. The action of $G$ on a metric space $(X,d)$ is said to be $K$-\textit{acylindrical} if for any pair of points  $x,y\in X$ with  $d(x,y)\geq K$ the pointwise stabilizer 
of $\{x,y\}$ is finite. The action of $G$ on $X$ is said to be acylindrical if it is $K$-acylindrical for some $K>0$.
\end{defn}

\subsection{Complex of Spaces}
\begin{defn}
A complex of spaces, $C(\YY{})$, over a simplicial complex $Y$ consists of 
\newline{(1) local spaces $C_{\sigma}$ for each $\sigma \in S(Y)$}
\newline{(2) an embedding  $\varphi_{\sigma,\sigma^{'}}:C_{\sigma^{'}}\rightarrow C_{\sigma}$ whenever $\sigma \subset \sigma^{'}$.}
\newline{(3) for $\sigma \subset \sigma^{'} \subset \sigma^{''}$,  $\varphi_{\sigma,\sigma^{''}}=\varphi_{\sigma,\sigma^{'}}
\circ \varphi_{\sigma^{'},\sigma^{''}}$ }
\end{defn}

\begin{defn}{(Realization of complex of spaces)}
Let $C(\YY{})$ be a complex of spaces over $Y$. We define the realization of $C(\YY{})$ to be the quotient space
\begin{center}
$|C(\YY{})|:=\Big( \coprod_{\sigma\in S(Y)} (\sigma \times C_{\sigma})\Big)\Big/ \simeq$
\end{center}
where $(i_{\sigma,\sigma^{'}}(x),s)\simeq (x,\varphi_{\sigma,\sigma^{'}}(s))$, $[\sigma,\sigma^{'}]\in \EE{}(\YY{})$ 
\end{defn}

\section{Construction of $EG$ and $\partial G$}\label{EG}
Let $G(\YY{})$ be a developable simple complex of group with fundamental group $G$ as defined in \ref{fundaG}. 
For each vertex $v$ of $Y$, the vertex group $G_{v}$ is  relatively hyperbolic to the subgroup ${P}_{v}$.
  Local maps $\varphi_{\sigma,\sigma^{'}}$ are fully quasi-convex embeddings i.e. if $\sigma\subset\sigma'$ 
  then $\varphi_{\sigma,\sigma^{'}}(G_{\sigma'})$
  is fully quasiconvex in $G_{\sigma}$. Then by Remark \ref{quasiconvex-parabolic}, $G_{\sigma}$ is  relatively hyperbolic to the subgroup 
  ${P}_{v}\cap G_{\sigma}$ for each $\sigma\in S(X)$. 
  We call ${P}_{v}\cap G_{\sigma}$ as $P_{\sigma}$.
  By extending the generating set of $G_{\sigma'}$ to a generating set of $G_{\sigma}$, $\varphi_{\sigma,\sigma^{'}}:G_{\sigma'}\to G_{\sigma}$ will 
induce a natural equivariant embeddings between the corresponding Cayley graphs and Augmented spaces. Also, $\varphi_{\sigma,\sigma^{'}}$
naturally extends to the Bowditch boundaries of corresponding local groups.

Let $X$ be the universal covering of $G(\YY{})$ associated to $\iota_{T}$.
Let $\Gamma_{\sigma}$ be the Cayley graph of $G_{\sigma}$ and $\Gamma_{\sigma}^{h}$ be the augmented spaces on which 
$G_{\sigma}$ acts properly discontinuously. Also let $\partial G_{\sigma}$ be the Bowditch Boundary of $G_{\sigma}$ and  $\bbar{\Gamma^{h}_{\sigma}}= \Gamma_{\sigma}^{h}\cup \partial G_{\sigma}$.
\begin{defn}
We define a complex of spaces over $X$, $EG~(resp. ~EG^{h})$ associated to $G(\YY{})$ 
\begin{center}
$EG:=\Bigg(G\times \coprod_{\sigma\in S(Y)}(\sigma\times \Gamma_{\sigma})\Bigg)\bigg/ \simeq$ 

$EG^{h}:=\Bigg(G\times \coprod_{\sigma\in S(Y)}(\sigma\times \Gamma_{\sigma}^{h})\Bigg)\bigg/ \simeq$
\end{center}
where $(g,i_{\sigma,\sigma^{'}}(x),s)\simeq (g\iota_{T}([\sigma,\sigma^{'}])^{-1},x,\varphi_{\sigma,\sigma^{'}}(s))$, 
$[\sigma,\sigma^{'}]\in \EE{}(\YY{})$ and $(gg^{'},x,s)\simeq (g,x,g^{'}s)$, $g^{'}\in G_{\sigma},g\in G$.
\end{defn}
$G$ has natural action on $EG^{h}$ by left multiplication on the first factor.
Also there is a obvious projection map $p:EG^{h}\rightarrow X$ which injectively sends the first two factors and this map is $G$-equivariant. 
\begin{defn}
We define the space 
\begin{center}
$\partial_{stab} G := \Bigg(G\times \coprod_{\sigma\in S(Y)}(\{\sigma\}\times \partial G_{\sigma})\Bigg)\bigg/ \simeq$ 
\end{center}
where $(g,\{\sigma\},s)\simeq (g\iota_{T}([\sigma,\sigma^{'}])^{-1},\{\sigma^{'}\},\varphi_{\sigma,\sigma^{'}}(s))$, $[\sigma,\sigma^{'}]\in \EE{}(\YY{})$ and $(gg^{'},\{\sigma\},s)\simeq (g,\{\sigma\},g^{'}s)$, $g^{'}\in G_{\sigma},g\in G$.

Now we define the boundary of $G$ as 
\begin{center}
$\partial G:=\partial_{stab} G\cup \partial X $
\end{center}
Also we define $\overline{EG^{h}}:= EG^{h}\cup \partial G$.
\end{defn}
Here, we are taking the union of augmented spaces 
(respectively boundaries) corresponding to vertex groups of $X$ and gluing them along the augmented spaces(respectively boundaries) of the local groups accordingly.

$G$ also has natural action on $\partial G$ and $\overline{EG^{h}}$ by left multiplication on the first factor.
In the subsequent section we will try to give a topology on $\overline{EG^{h}}$ such that $\overline{EG^{h}}$ and $\partial G$ will be
compact and action of $G$ will be geometrically finite convergence action. 

 For simplicity of notation we will denote $G_{\sigma}$ as the stabilizer subgroup of the simplex $\sigma$ in $X$(Note that $stab(\sigma)$ is
 actually conjugate of a local group of $G(\YY{})$). It is easy to see that the map $\partial G_{\sigma}\rightarrow \partial G$ is $G_\sigma$- equivariant for every simplex $\sigma$ in $X$.

In the subsequent sections we assume our complex of groups satisfies all the hypothesis of the main theorem. 
Then by \ref{limset}, \ref{conv} and \ref{finite} $\mathcal{G(Y)}$ will 
satisfies \textit{limit set property}, \textit{convergence property} and \textit{finite intersection property}.
\subsection{Domains and Topology}

This subsection is mostly taken from section 4 and 6 from Martin's paper\cite{martin}. Proofs of most of the propositions and theorems will work as it is by adapting to our setting.

\begin{defn}
Let $\xi \in \partial_{stab}G$. We define $\textit{domain}$ of $\xi$, $D(\xi):=span\{\sigma\in S(X) : \xi \in \partial_{stab}G_{\sigma}\}$.
\end{defn}

\begin{prop}{(Propositions 4.2,4.4 of \cite{martin})}\label{domain}
(i) For every vertex $v$, the quotient map $\partial G_v\to\partial G$ is injective,\\
(ii) For every $\xi \in \partial_{stab}G$, $D(\xi)$ is finite convex subcomplex of $X$ uniformly bounded by the acylindricity constant.
\end{prop}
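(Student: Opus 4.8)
The plan is to prove the two assertions separately: (i) rests on the fact that fully quasi-convex embeddings extend to injective maps of Bowditch boundaries together with developability of $X$, while (ii) combines the limit set property (Theorem \ref{limset}), the finite intersection property (Proposition \ref{finite}), and acylindricity.

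For (i), I would first record that since each local map $\varphi_{\sigma,\sigma'}$ is a fully quasi-convex embedding, it extends to a topological embedding $\partial G_{\sigma'}\hookrightarrow\partial G_{\sigma}$; in particular every elementary edge-identification used to define $\partial_{stab}G$ is injective on boundary points. The map $\partial G_v\to\partial G$ factors through $\partial_{stab}G$, and since a point of a vertex boundary cannot be identified with a direction to infinity, it suffices to prove $\partial G_v\to\partial_{stab}G$ is injective. Unwinding the equivalence relation, an identity $[1,v,\eta]=[1,v,\eta']$ is witnessed by a finite chain of edge-moves and untwisting-moves $(gg',\sigma,s)\simeq(g,\sigma,g's)$; applying the projection $p$ to $X$, this chain traces a loop based at the simplex $v$. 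Here developability enters decisively: $X$ is simply connected, so the loop is null-homotopic and the composite boundary map it induces on $\partial G_v$ is the identity, forcing $\eta=\eta'$. The only point needing care is that the untwisting-moves are compatible with the $G_v$-action on $\partial G_v$, which holds by equivariance of the boundary extension.

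For (ii), the first step is a membership characterization. For a simplex $\sigma$ with vertices $v_0,\dots,v_k$ one has $G_\sigma=\bigcap_i G_{v_i}$ as stabilizers in $X$, each inclusion $G_\sigma\le G_{v_i}$ is fully quasi-convex, and hence, working inside the relatively hyperbolic group $G_{v_0}$ and iterating Theorem \ref{limset}, one gets $\Lambda G_\sigma=\bigcap_i\Lambda G_{v_i}$; equivalently $\xi\in\partial G_\sigma$ iff $\xi\in\partial G_v$ for every vertex $v$ of $\sigma$. In particular $D(\xi)$ is closed under passage to faces (so it is a genuine subcomplex) and is nonempty, since every point of $\partial_{stab}G$ lies in some vertex boundary. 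For convexity I would take $\sigma,\tau\in D(\xi)$ and the (unique) CAT(0) geodesic between their barycenters; the common stabilizer $G_\sigma\cap G_\tau$ fixes both endpoints, hence fixes this geodesic and is contained in $G_\rho$ for every simplex $\rho$ it meets. Applying Theorem \ref{limset} gives $\xi\in\Lambda G_\sigma\cap\Lambda G_\tau=\Lambda(G_\sigma\cap G_\tau)\subseteq\Lambda G_\rho=\partial G_\rho$, so $\rho\in D(\xi)$ and $D(\xi)$ is convex.

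For boundedness and finiteness I would invoke acylindricity and Proposition \ref{finite}. By the previous paragraph, for $\sigma,\tau\in D(\xi)$ the group $G_\sigma\cap G_\tau$ has $\xi$ in its limit set, hence is infinite (it contains an infinite, possibly parabolic, stabilizer of $\xi$); since it fixes the barycenters of $\sigma$ and $\tau$, $K$-acylindricity forces $d(\sigma,\tau)<K$, so $\diam D(\xi)<K$ uniformly. Finiteness then follows from the finite intersection property: the simplices of $D(\xi)$ yield subgroups $G_\sigma$ all sharing $\xi$ in their limit sets, that is distinct cosets of fully quasi-convex subgroups with a common limit point, of which Proposition \ref{finite} permits only finitely many.

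I expect the main obstacle to be the local-to-global transfer of the limit set property: Theorem \ref{limset} is available only inside a single relatively hyperbolic vertex group, whereas the statement concerns the still-hypothetical global object $\partial G$, so the convexity argument must be organized so that every invocation of Theorem \ref{limset} takes place within one vertex group and the conclusions are patched along shared faces (using that $D(\xi)$ is connected and closed under faces). The genuinely new difficulty relative to Martin's hyperbolic setting is that $\xi$ may be a parabolic point of some $G_v$; one must verify that the infinite subgroup it contributes to $G_\sigma\cap G_\tau$ is still detected by acylindricity and that the limit set computations remain valid at parabolic points, which I would justify using Remark \ref{quasiconvex-parabolic}.
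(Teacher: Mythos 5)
The paper itself offers no proof of this proposition: it defers entirely to Martin's Propositions 4.2 and 4.4 in \cite{martin} with the blanket remark that the proofs adapt. Your toolkit is the right one and matches what the paper sets up for exactly this purpose --- boundary embeddings induced by fully quasi-convex inclusions, Theorem \ref{limset} inside a single vertex group, acylindricity for the diameter bound, Proposition \ref{finite} for finiteness, and Remark \ref{quasiconvex-parabolic} for the parabolic points --- and you correctly flag the two genuinely delicate points (local-to-global use of the limit set property, and the presence of parabolics). But two steps are asserted where the actual work lies.

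First, in (i) the sentence ``$X$ is simply connected, so the loop is null-homotopic and the composite boundary map it induces on $\partial G_v$ is the identity'' is the entire content of the claim, and simple connectivity alone does not deliver it. A chain realizing $[1,v,\eta]=[1,v,\eta']$ uses both the edge-moves and the untwisting moves $(gg',\sigma,s)\simeq(g,\sigma,g's)$, so its holonomy is not a composite of injections but a composite of injections followed by the action of some $g\in G_v$; one must show this $g$ acts trivially on the transported point. That requires reducing the null-homotopy to the $2$-cells of the barycentric subdivision, i.e.\ to flags $\sigma\subset\sigma'\subset\sigma''$, and invoking the cocycle condition $\varphi_{\sigma,\sigma''}=\varphi_{\sigma,\sigma'}\circ\varphi_{\sigma',\sigma''}$ of a \emph{simple} complex of groups (together with equivariance of the boundary extensions) to kill the holonomy cell by cell. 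Without naming these inputs the argument is a restatement of the claim. Second, in (ii) the ``membership characterization'' $\xi\in\partial G_\sigma$ iff $\xi\in\partial G_v$ for every vertex $v$ of $\sigma$ does not follow by ``iterating Theorem \ref{limset}'': that theorem computes $\Lambda H_1\cap\Lambda H_2$ inside the Bowditch boundary of one relatively hyperbolic group containing both $H_i$ as fully quasi-convex subgroups, whereas $\bigcap_i\partial G_{v_i}$ is an intersection of subsets of the quotient $\partial_{stab}G$, where two vertex boundaries can a priori meet through a long chain of simplices never passing through their common face. Establishing this characterization is essentially equivalent to the convexity you are trying to prove, so as written the argument is circular; the repair (an induction along a connected chain in $D(\xi)$, applying Theorem \ref{limset} only to simplices sharing a vertex and propagating the common stabilizer by CAT(0) convexity of fixed-point sets) is the actual substance of Martin's Proposition 4.4, which you gesture at in your final paragraph but do not carry out. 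The remaining steps --- infinitude of $G_\sigma\cap G_\tau$ from a nonempty limit set, the bound $\diam D(\xi)<K$ from acylindricity, and finiteness from Proposition \ref{finite} --- are fine once that induction is in place.
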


\begin{defn}{($\xi$-family, \cite{martin})}
Let $\xi \in \partial_{stab}G$. A $\xi$-family is defined to be as a collection $\UU{}$ of open sets $U_{v}$ where 
$ v\in V(D(\xi))$ and $U_{v}$ is a neighbourhood of representative of $\xi$ in $\overline{\Gamma^{h}_{v}}$
such that for every two adjacent vertices $v$, $v^{'}$ we have
\begin{center}
$\varphi_{v,e}(\overline{\Gamma^{h}_{e}})\cap U_{v}=\varphi_{v^{'},e}(\overline{\Gamma^{h}_{e}})\cap U_{v^{'}}$,
where $e$ is an edge between $v$ and $v^{'}$
\end{center}
\end{defn}
Next, we  give a topology on $\partial G$ due to Martin \cite{martin}.

Let us choose a basepoint $v_{0} \in X$. For a given point $x\in X(resp.~ \eta \in \partial X)$ we denote $c_{x}(resp.~c_{\eta})$ 
to be the unique geodesic segment(respectively geodesic ray) from $v_{0}$ to $x(resp.~ \eta)$. 
We  denote $D^{\epsilon}(\xi)$ to be the $\epsilon$-neighbourhood of $D(\xi)$ where $\epsilon\in (0,1)$.

A geodesic $c$ is said to be \textit{goes through} (reps. \textit{enters})
$D^{\epsilon}(\xi)$ if $\exists$ $t_{0}, t_{1}$ such that $c(t_{0})\in D^{\epsilon}(\xi),c(t_{1})\in \overline{D^{\epsilon}(\xi)}$ 
and $\forall t>t_{1}$, $c(t)\notin D^{\epsilon}(\xi)$ (respectively if $\exists t_{0}$ such that $c(t_{0})\in D^{\epsilon}(\xi)$).
If $c_{x}$ or $c_{\eta}$ goes through $D^{\epsilon}(\xi)$, the first simplex which is met by $c_{x}$ or $c_{\eta}$
after leaving $D^{\epsilon}(\xi)$ is said to be an \textit{exit simplex} and is denoted by $\sigma_{\xi,\epsilon}(x)$.
For $x\in D^{\epsilon}(\xi)$ we define $\sigma_{\xi,\epsilon}(x):=\sigma_{x}$

\begin{defn}(Martin \cite{martin})
Let $\xi \in \partial_{stab}G, \UU{}$ a $\xi$-family and $\epsilon \in (0,1)$. We define \\
(i) $Cone_{\UU{},\ep{}}(\xi):= \{x\in \overline{X}\setminus D(\xi):$ $c_{x}$ goes 
through $D^{\ep{}}(\xi)$ and for all $v\in V(D(\xi)\cap \sigma_{\xi,\epsilon}(x))$, $ \overline{\Gamma^{h}_{\sigma_{\xi,\epsilon}(x)}}\subset U_{v}$,
in $\overline{\Gamma^{h}_{v}}\}$,\\
(ii) $\widetilde{Cone}_{\UU{},\ep{}}(\xi):= \{x\in \overline{X}:$ $c_{x}$ enters $D^{\ep{}}(\xi)$ and for all $v\in V(D(\xi)
\cap \sigma_{\xi,\epsilon}(x))$, $\overline{\Gamma^{h}_{\sigma_{\xi,\epsilon}(x)}}\subset U_{v}$,in $\overline{\Gamma^{h}_{v}}\}$
\end{defn}
Martin, in \cite{martin}, proved that the cones $Cone_{\UU{},\ep{}}(\xi)$ and $\widetilde{Cone}_{\UU{},\ep{}}(\xi)$
are open sets in $\bbar X$.

\textbf{\underline{Topology on $\overline{EG^{h}}$.}} \\
$\overline{EG^{h}}$ consists of three kind of elements $\tilde{x}\in EG^{h}$, $\eta \in \partial X$ and $\xi \in \partial_{stab}G$.
\begin{itemize}
\item \underline{For $\tilde{x}\in EG^{h}$ }:
We define a basis of neighbourhood of $\tilde{x}$ in $\overline{EG^{h}}$ coming from the 
topology of $EG^{h}$ as a CW complex and denote it by $\OO{}_{\overline{EG^{h}}}(\tilde{x})$.
\item \underline{For $\eta \in \partial X$} :
Let $\OO{}_{\overline{X}}(\eta)$ be the basis of neighbourhood of $\eta$ in $\overline{X}$ and $U\in \OO{}_{\overline{X}}(\eta)$.
we define a neighborhood of $\eta$ in $\overline{EG^{h}}$
\begin{center}
$V_{U}(\eta)=p^{-1}(U\cap X)\cup (U\cap \partial X) \cup \{\xi\in \partial_{stab}G| D(\xi)\subset U\}$
\end{center}
We define, $\OO{}_{\overline{EG^{h}}}(\eta):=\{V_{U}(\eta)|U\in\OO{}_{\overline{X}}(\eta)\}$, 
the basis of neighbourhood of $\eta$ in $\overline{EG^{h}}$.
\item 
\underline{For $\xi \in \partial_{stab}G$}:
Let $\UU{}$ be $\xi$-family and $\ep{}\in(0,1)$. We define four sets around $\xi$ as follows:\\
$W_{1}= \{\tilde{x}\in EG^{h}:p(\tilde{x})=x\in D^{\ep{}}(\xi)~\mbox{and}~ \varphi_{v,\sigma_{x}}(\tilde{x})\in U_{v}~\mbox{for all vertex}~ 
v\in D(\xi)\cap \sigma_{x}\}$,\\
$W_2$ = the set of points in $EG$ whose projection in $X$ belongs to $ Cone_{\UU{},\ep{}}(\xi)$.\\
$W_{3}:= Cone_{\UU{},\ep{}}(\xi)\cap \partial X$,\\
$W_{4}:= \{\xi^{'} \in \partial_{stab} G: D(\xi^{'})\backslash D(\xi)\subset \widetilde{Cone}_{\UU{},\ep{}(\xi)}~\mbox{and}~\xi^{'}\in U_{v},
~\mbox{for all vertex}~ v \in D( \xi )\cap D(\xi^{'})\}$ 
\end{itemize}
We define a neighbourhood around $\xi$ as $W_{\UU{},\ep{}}(\xi):=W_{1}\cup W_{2}\cup W_{3} \cup W_4$.
Let $\OO{}_{\overline{EG^{h}}}(\xi)= \{W_{\UU{},\ep{}}(\xi): \UU{}~ \xi\mbox{-family and}~\ep{}\in(0,1)\}$.
We give $\overline{EG^{h}}$ the topology generated by the sub-basis $\OO{}_{\overline{EG^{h}}}(x)$, $x\in \overline{EG^{h}}$.
In fact Martin showed that $\OO{}_{\overline{EG^{h}}}(x)$ is a basis for this topology. 
Martin, in \cite{martin}, showed that the topology remains equivalent even if we change the base point. From Proposition \ref{domain} the map $\partial G_v\to\partial G$ is injective for all vertex $v$ of $X$, moreover Martin proved that these maps are embedding(Proposition 6.19 \cite{martin}).

For hyperbolic case, that is, if we consider local groups to be hyperbolic and take Gromov boundary instead of Bowditch boundary then 
the Separability, Metrisability and Compactness of $\overline{EG}$ are proved in \cite{martin}. 
The proof requires $X$ to be $CAT(0)$, acylindrical action of $G$ on $X$ and convergence property of the local groups which are true in our case also, 
hence same proofs will work in proving the Separability, Metrisability and Compactness of $\overline{EG^{h}}$.
For instance, to prove sequentially compactness of $\overline{EG^h}$, we take a sequence  $\{x_{n}\}$  of points in $\overline{EG^{h}}$.
Now, due to Theorem 6.17 of \cite{martin}, $EG^h$ is dense in $\overline{EG^{h}}$. So, we can take the sequence $\{x_n\}$ in $EG^h$ and 
let $a_{n}=p(x_{n})$ be its image in $X$. For each $n$, let $\{\sigma^{(n)}_{1},\sigma^{(n)}_{2}...,\sigma^{(n)}_{m(n)}\}$ 
be the path of simplices meet by  the geodesics $[v_{0},a_n]$(note that $\{\sigma^{(n)}_{1}=v_{0}\}$). Then three cases can occur. \\
\underline{Case 1} :
$\{a_{n}\}$'s  contained in finitely many simplices in $X$. Then upto subsequences we can assume for all $n$, $a_{n}$'s 
contained in the interior of a single simplex, $\sigma$ (say). Hence $x_{n}$ will converges to some point of 
$\overline{\Gamma^{h}_{\sigma}}\hookrightarrow \overline{EG^{h}}$.\\
\underline{Case 2} :
Number of simplices in $\{\sigma^{(n)}_{k}\}_n$ is finite for all  $k=1,...,m(n)$. Then upto subsequence 
$<a_{n},a_{n'}>_{v_{0}}\rightarrow \infty$. Hence $\{a_n\}$ converges to $\eta$, where $\eta\in \partial X$. 
From the definition of topology on $\partial G$, it can be proved that $\{x_{n}\}$ converges to $\eta$.\\ 
 \underline{Case 3} : 
 Number of simplices in $\{\sigma^{(n)}_{m}\}$ is infinite for some $m$. Let $m_{0}$ be the first number such that
 the number of simplices in $\{\sigma^{(n)}_{m_{0}}\}$ is infinite. Now upto subsequence we can let 
 $\sigma_{1},\sigma_{2},...,\sigma_{m_{0}-1}$ be the first $m_{0}-1$ number of simplices met by the geodesics $[v_{0},a_n]$. 
 Obviously $\sigma_{m_{0}-1}\subset \sigma^{(n)}_{m_{0}}$ for all $n$. Then by convergence property $\partial G_{\sigma^{(n)}_{m_{0}}}$
 converges to some point $\xi$ in $\partial G_{\sigma_{m_{0}-1}}$. Then from the definition of topology on $\overline{EG^{h}}$,
 it can be shown that  $\{x_{n}\}$ converges to $\xi$.

\begin{thm}(Martin, Theorems 7.12, 7.13 of \cite{martin})
  $\overline{EG^{h}}$ is separable, metrizable and is compact.
\end{thm}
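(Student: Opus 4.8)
The plan is to import Martin's proof (\cite{martin}, Theorems 7.12--7.13) and to re-check only the inputs it uses, modifying the one place where parabolic limit points intervene. Martin's three standing hypotheses all survive the passage to our setting: $X$ is a complete CAT$(0)$ space, since it is the universal cover of a non-positively curved $M_{\kappa}$-complex with $\kappa\leq 0$; the $G$-action on $X$ is acylindrical, a hypothesis of Theorem \ref{main theorem}, which via Proposition \ref{domain} makes every domain $D(\xi)$ a finite convex subcomplex of uniformly bounded diameter; and the local groups enjoy a convergence property. In the hyperbolic case this last property is that of quasiconvex subgroups, whereas here it is the convergence property of fully quasiconvex subgroups of relatively hyperbolic groups (Theorem \ref{conv}), supplemented by the limit set property (Theorem \ref{limset}) and the finite intersection property (Proposition \ref{finite}). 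The essential additional input is that each local compactification $\overline{\Gamma^{h}_{\sigma}}=\Gamma^{h}_{\sigma}\cup\partial G_{\sigma}$ is itself compact and metrizable; this holds because $\Gamma^{h}_{\sigma}$ is a proper hyperbolic (augmented) space and $\partial G_{\sigma}$ is its Bowditch boundary.

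For separability, I would note that $EG^{h}$ carries only countably many cells: $G$ is countable, $S(Y)$ is finite, and each $\Gamma^{h}_{\sigma}$ has countably many cells since $G_{\sigma}$ is finitely generated and the horoballs of Definition \ref{ch} add countably many vertices. Thus $EG^{h}$ is separable, and as $EG^{h}$ is dense in $\overline{EG^{h}}$ (Theorem 6.17 of \cite{martin}), so is $\overline{EG^{h}}$.

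For metrizability I would verify the hypotheses of the Urysohn metrization theorem exactly as in \cite{martin}. Hausdorffness and regularity are obtained by separating and shrinking neighbourhoods of the three kinds of points $\tilde x\in EG^{h}$, $\eta\in\partial X$ and $\xi\in\partial_{stab}G$ through the explicit bases $\OO_{\overline{EG^{h}}}(\cdot)$; the compatibility condition defining a $\xi$-family, together with the finiteness of $D(\xi)$, lets one propagate the separation at $\xi$ across the finitely many vertices of $D(\xi)$, each copy of the data living in the metrizable space $\overline{\Gamma^{h}_{v}}$. A countable basis is produced by restricting the $\xi$-families $\UU$ to countable neighbourhood bases of $\xi$ in each $\overline{\Gamma^{h}_{v}}$ and the parameter $\ep$ to rationals. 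The only point not literally present in \cite{martin} is that $\xi$ may be a parabolic limit point of $\partial G_{\sigma}$, but since it is still a point of the metrizable space $\overline{\Gamma^{h}_{v}}$ the same neighbourhood bookkeeping applies.

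Finally, with metrizability in hand, compactness reduces to sequential compactness. Given a sequence in $\overline{EG^{h}}$, density lets me take it in $EG^{h}$ and project to $a_n=p(x_n)\in X$, and I would analyse the chain of simplices crossed by the geodesics $[v_0,a_n]$, yielding the three cases above. Cases 1 and 2 follow Martin verbatim, giving a limit in some $\overline{\Gamma^{h}_{\sigma}}$ or in $\partial X$. The main obstacle is Case 3, where infinitely many simplices occur at some level $m_0$: here I would invoke the convergence property (Theorem \ref{conv}) for the fully quasiconvex images, so that $\partial G_{\sigma^{(n)}_{m_0}}$ converges to a single $\xi\in\partial G_{\sigma_{m_0-1}}$, and then match this against the neighbourhood $W_{\UU,\ep}(\xi)=W_1\cup W_2\cup W_3\cup W_4$ to conclude $x_n\to\xi$. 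This is precisely the step where replacing the Gromov boundary by the Bowditch boundary, and quasiconvexity by full quasiconvexity, does real work, since the limiting point may now be parabolic rather than conical; verifying that the defining conditions of $W_4$ (involving domains $D(\xi')$ and the cones $\widetilde{Cone}_{\UU,\ep}(\xi)$) remain stable under this convergence is the crux of the argument.
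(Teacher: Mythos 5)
Your proposal follows essentially the same route as the paper: both verify that Martin's standing hypotheses (CAT$(0)$ structure of $X$, acylindricity, and the convergence/limit-set/finite-intersection properties of fully quasiconvex subgroups) survive the passage to augmented spaces and Bowditch boundaries, and then import Martin's Theorems 7.12--7.13 wholesale, with the paper likewise singling out the three-case sequential-compactness argument and the convergence property in Case 3 as the only place where the relative setting does real work. Your additional remarks on separability and Urysohn metrizability are consistent elaborations of the same citation-based argument.
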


\section{Convergence Group Action of $G$}\label{convergence}

In this section, we describe Martin's strategy (in \cite{martin}) to prove convergence action of $G$ on $\partial G$. It is divided 
into following three propositions. As the proof of these propositions almost remains the same as given by Martin (\cite{martin}), we 
will  not provide the full details but give the ideas and account for where it differs.
\begin{prop}\label{Proposition 1}(by adapting Lemma 9.14 of \cite{martin})
Let $\{g_{n}\}$ be an injective sequence in $G$ and there exists $v_{0}$ and $v_{1}$ such that 
$g_{n}v_{0}=v_{1}$ for infinitely many $n$. Then there exists $\xi^{+},\xi^{-}\in \partial G$ 
and a subsequence $\{g_{n_r}\}$ of $\{g_n\}$ such that for any compact set $K$ in $\partial G\backslash \{\xi^{-}\}$, 
$g_{n_r}K$ convergences to $\xi^{+}$ uniformly.
\end{prop}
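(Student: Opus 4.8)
The plan is to transfer the convergence dynamics of the single vertex group $G_{v_0}$ to all of $\partial G$. Since the topology on $\overline{EG^h}$ is independent of the chosen basepoint, I may take the topological basepoint to be exactly the vertex $v_0$ appearing in the hypothesis. First I would pass to a subsequence so that $g_n v_0 = v_1$ for \emph{every} $n$. Fixing one element $g$ with $g v_0 = v_1$, I write $g_n = g h_n$ with $h_n := g^{-1}g_n \in \operatorname{Stab}(v_0) = G_{v_0}$; injectivity of $\{g_n\}$ forces $\{h_n\}$ to be an injective sequence in $G_{v_0}$. Because $g$ acts on $\overline{EG^h}$ as a homeomorphism and $g_{n_r}K = g(h_{n_r}K)$, it suffices to prove that a subsequence of $\{h_n\}$ attracts every compact subset of $\partial G\setminus\{\zeta^-\}$ to a single point $\zeta^+$; then $\xi^+:=g\zeta^+$ and $\xi^-:=\zeta^-$ answer the proposition.

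Next I would invoke relative hyperbolicity of $G_{v_0}$. By Definition \ref{bowditch}, $G_{v_0}$ acts on its Bowditch boundary $\partial G_{v_0}$ as a convergence group, so after a further subsequence $\{h_{n_r}\}$ there are points $\zeta^+,\zeta^-\in\partial G_{v_0}$ with $h_{n_r}L\to\zeta^+$ uniformly for every compact $L\subset\partial G_{v_0}\setminus\{\zeta^-\}$; the same uniform convergence holds on the compactified augmented space $\overline{\Gamma^h_{v_0}}=\Gamma^h_{v_0}\cup\partial G_{v_0}$, since $G_{v_0}$ acts properly on the hyperbolic space $\Gamma^h_{v_0}$. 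Via the embedding $\partial G_{v_0}\hookrightarrow\partial G$ (Proposition \ref{domain}) I regard $\zeta^\pm$ as points of $\partial G$; note $v_0\in D(\zeta^+)$, so the basepoint lies in the domain of the attractor. The remaining content is to upgrade this uniform convergence from $\partial G_{v_0}$ to all of $\partial G\setminus\{\zeta^-\}$.

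For this main step I would work in the cone--family topology around $\zeta^+$. Fix a basic neighbourhood $W_{\UU,\ep}(\zeta^+)$; by construction its membership condition is detected by the single neighbourhood $U_{v_0}$ of $\zeta^+$ in $\overline{\Gamma^h_{v_0}}$ together with the exit--simplex condition of the cones $Cone_{\UU,\ep}(\zeta^+)$ and $\widetilde{Cone}_{\UU,\ep}(\zeta^+)$. Given $\zeta\in\partial G\setminus\{\zeta^-\}$ I would look at the geodesic $c$ from $v_0$ towards $\zeta$ (a ray $c_\eta$ if $\zeta=\eta\in\partial X$, a segment reaching $D(\zeta)$ if $\zeta\in\partial_{stab}G$) and at the first simplex $\sigma\ni v_0$ that $c$ meets, which records the direction of $\zeta$ at $v_0$. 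If $v_0\in D(\zeta)$, then $\zeta$ already lies in $\partial G_{v_0}$ and the $G_{v_0}$--convergence applies verbatim, landing $h_{n_r}\zeta$ in $U_{v_0}$ hence in $W_{\UU,\ep}(\zeta^+)$. If $v_0\notin D(\zeta)$, the relevant datum is the fully quasiconvex subgroup $G_\sigma\le G_{v_0}$ with limit set $\Lambda G_\sigma\subset\partial G_{v_0}$: since $h_{n_r}$ drives every compact subset of $\partial G_{v_0}\setminus\{\zeta^-\}$ into $U_{v_0}$, the translated limit sets $h_{n_r}\Lambda G_\sigma$ collapse to $\zeta^+$ (this is exactly the convergence property of Theorem \ref{conv} combined with the north--south dynamics of $\{h_{n_r}\}$). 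Consequently the image geodesic exits $D^\ep(\zeta^+)$ through a simplex $\sigma'$ whose whole augmented space $\overline{\Gamma^h_{\sigma'}}$ sits inside $U_{v_0}$, which is precisely the defining condition for $h_{n_r}\zeta$ to lie in $Cone_{\UU,\ep}(\zeta^+)$, so the three types of points fall simultaneously into $W_3$, $W_2$ and $W_4$.

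The hard part will be making this last step \emph{uniform} over a compact $K$ rather than pointwise, and in particular controlling points of $K$ whose direction at $v_0$ accumulates on $\zeta^-$. I would handle uniformity by using that compactness of $K$ in $\partial G$ confines the directions at $v_0$ to a compact subset of $\overline{\Gamma^h_{v_0}}\setminus\{\zeta^-\}$, so that a single $U_{v_0}$ and a single exit--simplex estimate serve all of $K$ at once; here acylindricity enters through the uniform bound on domains in Proposition \ref{domain}(ii), and the finite intersection property (Proposition \ref{finite}) keeps the family of first simplices $\sigma$ and their stabilizers under control. Once uniformity over $K$ is secured, translating back by $g$ yields $g_{n_r}K\to\xi^+$ uniformly and completes the proof.
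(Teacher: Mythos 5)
Your overall strategy --- reduce to the stabilizer $G_{v_0}$, extract north--south dynamics there, and then propagate to all of $\partial G$ via exit simplices and Martin's convergence criterion --- is the same as the paper's, and the initial reduction $g_n=gh_n$ with $h_n\in G_{v_0}$ is fine. The gap is in the case $v_0\notin D(\zeta)$, and more generally in how you treat simplices $\sigma\ni v_0$ whose limit set contains $\zeta^-$. You assert that $h_{n_r}\Lambda G_\sigma$ collapses to $\zeta^+$ ``by the convergence property of Theorem \ref{conv} combined with north--south dynamics.'' Neither tool applies when $\zeta^-\in\partial G_\sigma$ and the elements $h_{n_r}$ fall into only finitely many left cosets of $G_\sigma$: north--south dynamics needs $\Lambda G_\sigma$ to be a compact subset of $\partial G_{v_0}\setminus\{\zeta^-\}$, which it then is not, and Theorem \ref{conv} needs the $h_{n_r}$ to lie in infinitely many distinct cosets of $G_\sigma$. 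In that residual case $h_{n_r}\partial G_\sigma=\partial G_\sigma$ along a subsequence and the translated limit set does not shrink to a point at all. Your proposed fix for uniformity --- that compactness of $K$ in $\partial G\setminus\{\zeta^-\}$ confines the ``directions at $v_0$'' to a compact subset of $\overline{\Gamma^h_{v_0}}\setminus\{\zeta^-\}$ --- fails for the same reason: $K$ may contain points (for instance boundary points of $X$, or points $\zeta$ with $D(\zeta)\cap D(\zeta^-)\neq\emptyset$) whose geodesics from $v_0$ leave through a simplex $\sigma$ with $\zeta^-\in\partial G_\sigma$, even though $K$ itself stays away from $\zeta^-$.

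This is exactly where the paper does its real work. It splits according to whether $\xi^-$ is parabolic in $\partial G_{v_0}$, and, for $\sigma\subset D(\xi^-)$, according to whether the cosets $\{g_nG_\sigma\}$ are infinite in number (then Theorem \ref{conv} applies) or finite (then, after a subsequence, the $g_n$ stabilize $\sigma$, so that $\xi^+\in\partial G_\sigma$, and one must move outward to the next simplex and repeat). This iteration produces a finite subcomplex $A\subset D(\xi^+)\cap D(\xi^-)$ fixed pointwise by the $g_n$ such that the translates of the limit sets of all simplices in the deleted simplicial neighbourhood of $A$ converge uniformly to $\xi^+$; acylindricity of the action on $X$ (not merely the bound on domains from Proposition \ref{domain}) is what guarantees the iteration terminates, since after finitely many outward steps the pointwise stabilizers become finite. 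The convergence criterion is then applied with exit simplices taken from $N(A)\setminus A$ rather than from the star of $v_0$ alone. Without this case analysis and iterative construction your argument does not close.
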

We sketch the proof of Proposition \ref{Proposition 1}.
 Without loss of generality, we can take $g_nv_0=v_0$ for infinitely many $n$ and $v_0$ as the base point of the topology
 on $\bbar{EG^h}$. Then $g_n$ stabilizes the vertex space $\Gamma^h_{v_0}$.
 $G_{v_0}$ has convergence action on $\bbar{\Gamma^h_{v_0}}$. Thus, there exists a subsequence of $\{g_n\}$, and points 
 $\xi_{-},\xi_{+}\in\partial G_{v_0}$ such that for every compact set $K_{v_0}$ of $\bbar{\Gamma^h_{v_0}}\setminus\{\xi_{-}\}$, the sequence
 of translates $g_nK_{v_0}$ converge uniformly to $\xi_{+}$. Let $K$ be compact set in $\partial G\backslash \{\xi^{-}\}$ and 
 $\bbar{p}(K)=(\cup_{\xi\in K}D(\xi))\cup(K\cap\partial X)$. We will be applying convergence criterion
 proved by Martin (Corollary 7.16 of \cite{martin}) in order to show that upto a subsequence, $g_nK$ converges uniformly to $\xi_{+}$.
 In order to do that let us first look into simplices in one simplicial neighbourhood of $D(\xi_{+})$.\\
 
 Let $\sigma$ be a simplex in $X$ such that $v_0\in\sigma\cap D(\xi_{+})$. This implies $\partial G_{\sigma}\subset\partial G_{v_0} $.\\ 
 If $\sigma$ is not contained in $D(\xi_{-})$, then $\xi_{-}\notin\partial G_{\sigma}$. Thus, up to a subsequence, 
 convergence action of vertex group $G_{v_0}$  implies that $g_n\partial G_{\sigma}$ converges uniformly to $\xi_{+}$ in $G_{v_0}$.\\ 
 Now, let $\sigma$ be contained in the subcomplex $D(\xi_{-})$ then $\xi_{-}\in \partial G_{\sigma}$.\\
\underline{Case I.} Let $\xi^-$ be parabolic in $\partial G_{v_0}$. Now suppose for some $v\in V(D(\xi^+)\cap D(\xi^-))$ fixed by all $g_n$, 
$g_n.\xi^{-}\rightarrow \xi^{'}(\neq \xi^{+})$. Then we got $\{g_n\}$ and $\xi^{'}(\neq \xi^{+})$ such that 
$g_n.\xi^{-}\rightarrow \xi^{'}$ and $g_n.\tilde{\xi}\rightarrow \xi^{+}$ for all $\tilde{\xi}\neq \xi^-$, 
which contradicts the fact that $\xi^-$ is parabolic. Hence 

1) For all $v\in V(D(\xi^+)\cap D(\xi^-))$ fixed by $g_n$'s and for any compact set $K_v$ in $\partial G_{v}$
up to a subsequence, $g_nK_v$ converges to $\xi^+$ uniformly. 

2) For any simplex $\sigma$ not in $D(\xi^+)\cap D(\xi^-)$ but having a common vertex 
$v\in \sigma\cap(D(\xi^+)\cap D(\xi^-))$ fixed by all $g_n$'s the following holds : if
$\{g_nG_{\sigma}\}$ is  an infinite collection of cosets then, up to a subsequence,
$g_n\partial G_{\sigma}$ converges to $\xi^+$ uniformly by convergence property
for fully quasiconvex subgroups.\\

\underline{Case II.} $\xi^-$ is not parabolic in $\partial G_{v_0}$. Recall, we have taken  $\sigma$ to  be a simplex contained in $D(\xi^-)$ with the
vertex $v_0 \in V(D(\xi^+)\cap D(\xi^-))$, then $\partial G_{\sigma}$ contains at least two points including $\xi^-$, 
otherwise $G_{\sigma}$ would be a parabolic subgroup which implies $\xi^-$ is parabolic in $\partial G_{v_0}$, a contradiction.

 $\bullet$ Let the set of cosets $\{g_nG_{\sigma}:n\geq 1\}$ be infinite.
 For $x\in \partial G_{\sigma}$ other than $\xi^-$, up to a subsequence, $g_nx$ converges to $\xi^+$. 
 This is due to convergence action of $G_v$.  So, $ g_n\partial G_{\sigma}$
 converges uniformly to $\xi_{+}$.\\
 
 $\bullet$ If the set of cosets $\{g_nG_{\sigma}:n\geq 1\}$ is finite then up to a subsequence of $\{g_n\}$, we can take
 $g_n\partial G_{\sigma}=g_N\partial G_{\sigma}$ and $g_n^{-1}g_N$ stabilizes $\sigma$. Replacing $g_n^{-1}g_N$ by $g_n$ we can assume
 $g_n$ stabilizes each $\sigma$ and hence $\xi_{+}\in g_n\partial G_{\sigma}$.

 Suppose $\tau$ is a simplex in $D(\xi_{-})\cap D(\xi_{+})$ fixed pointwise by each element of $\{g_n\}$. For each vertex $v\in\tau$,
  $\xi_{-},\xi_{+}\in \partial G_v$ and due to convergence property of $G_v$ for any compact set $C$ in $\partial G_v\setminus\{\xi_{-}\}$,
 up to a subsequence of $\{g_n\}$, $g_nC$ converges uniformly to $\xi_{+}$. Note that if $\partial G_{\tau}$ is a single point, then $\xi_{-}$
 is a parabolic point. In that case, $\xi_{-}=\xi_{+}$ and for any compact set $C$  in $\partial G_v$,
 up to a subsequence of $\{g_n\}$, $g_nC$ converges uniformly to $\xi_{+}$. 
 Now for any simplex $\sigma$ with a vertex $v\in \sigma\cap\tau$
 and $v\in D(\xi_{+})$, we can continue the above process. Let $A$ be a finite subcomplex in $ D(\xi^+)\cap D(\xi^-)$ such that
 
1) $A$ is fixed by $g_n$'s pointwise.

2) For all simplex $\sigma$ contained in the deleted simplicial neighbourhood of $A$, $g_n\partial G_{\sigma}$ converges to $\xi^+$ uniformly. 

3) For all simplex $\sigma$ in $A$ and for all $v\in V(\sigma\cap A)$,  $g_nK_{\sigma}$ convergences to $\xi^+$ uniformly, 
for any compact sets  $K_{\sigma}$ in $\partial G_{\sigma}\backslash\{\xi^{-}\}$.

Let $K$ be a compact subset in $\partial G\backslash \{\xi^{-}\}$. Now if $K_v=K\cap \partial G_v$ is non-empty for some vertex $v$ of $A$
then  as discussed above, up to a subsequence $g_nK_{v}$ converges to $\xi^+$ uniformly. 
 And for any other point $x$ of $K$, join $v_0$ to $g_nx$ by a geodesic $[v_0,g_nx]$. 
 The exit simplex for the geodesic $[v_0,g_nx]$ from $A$ will lie  $N(A)\setminus A$, where $N(A)$ is one simplicial neighbourhood of $A$. 
 Then by above reason, up to a subsequence, the sequence of translates of exit simplex by $g_n$'s converges to $\xi_{+}$. 
 By convergence criterion proved by Martin (Corollary 7.16 of \cite{martin}), 
 it would imply that $g_{n}K$ convergences to $\xi^{+}$ uniformly. \qed
 
 Suppose $Q$ is a relatively hyperbolic group then it acts on the augmented space $Q^h$ properly discontinuously by isometries. The augmented space
 is proper and hyperbolic. Consider the Bowditch boundary $\partial Q$ of $Q$. 
Let $\xi\in \partial Q$ and $U$ be a neighbourhood of $\xi$ in $\overline{Q^{h}}$. 
Let $K$ be a compact set in $Q^{h}$. Consider a base point $p$ in $Q$. The basis of neighbourhoods of $\xi$ is given by the collection
$V(\xi,r)$ of all $\alpha\in\overline{Q^{h}}$ such that if for some sequences 
$\{x_n\},\{y_n\}$ with $\alpha=[(x_n)],\xi=[(y_n)]$ we have
$\liminf_{i,j\to\infty}(x_i,y_j)_p\geq r$. There exists a sequence $ \{r_{n}\}$
going to infinity such that $V(\xi,r_n)\subsetneqq V(\xi,r_{n+1})$ for all $n$. For all large $n$, $V(\xi,r_n)\subset U$
and the distance between complement of $U$ in $Q$ and closure of $V(\xi,r_n)$ in $Q$ goes to infinity as $n\to\infty$.
Thus, there exists a natural number $N$ such that if some translate of $K$ intersects $V(\xi,r_N)$ then it must be contained in $U$.
Thus, it amounts to say `compact sets fade at infinity' in $Q^h$ i.e. for any $\xi\in \partial Q$,
for any neighbourhood $U$  of $\xi$ in $\overline{Q^{h}}$ and for any compact set $K$ in $Q^h$, there exists a sub neighbourhood $V$
of $\xi$ such that if any $Q$ translate of $K$ intersects $V$ then it must be contained in $U$.\\
Now for a complex of (relatively) hyperbolic groups, in each local groups compact set fade at infinity. Using this,
Martin \cite{martin} proved that compact set in $EG$ fade at infinity. (See Proposition 8.8 of \cite{martin}) 
The same thing hold in our case also where the same proof of Proposition 8.8 goes through.\\
Let $\{g_{n}\}$ be an injective sequence. Using compact set fade at infinity we have
for any compact set $K$ in $EG^h$, up to a subsequence, $g_{n}K$ converges to $ \xi$. This information is used by Martin \cite{martin}
to prove the following two lemmas for complex of hyperbolic groups. The exact proof works in our case also.

\begin{prop}(Lemma 9.15 of \cite{martin})
Let $\{g_{n}\}$ be a injective sequence in $G$. Suppose $\{g_{n}v\}$ is bounded for some(hence any) vertex $v$ and 
there do not exist $v_{0}$ and $v_{1}$ such that $g_{n}v_{0}=v_{1}$ for infinitely many $n$. 
Then there exists $\xi^{+},\xi^{-}\in \partial G$ such that for any compact set $K$ in $\partial G\backslash \{\xi^{-}\}$,
$g_{n}K$ converges to $\xi^{+}$ uniformly.
\end{prop}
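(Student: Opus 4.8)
The plan is to treat this as the ``bounded but non-elliptic'' case of the convergence dynamics, the analogue of Martin's Lemma 9.15, with the accumulation of the $g_n$ taking place inside $\partial_{stab}G$ rather than in $\partial X$. First I would pass to a subsequence along which the vertices $g_n v$ are pairwise distinct: if some value were attained infinitely often we would have $g_n v = w$ for infinitely many $n$, contradicting the hypothesis. Since $X$ is an $M_\kappa$-complex over the finite complex $Y$ with infinite vertex groups, its vertex links are infinite and $X$ is not proper, so the bounded sequence $\{g_n v\}$ of distinct vertices need not converge in $X$; instead I would locate its accumulation exactly as in Case 3 of the sequential-compactness argument. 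Looking at the geodesics $[v, g_n v]$, whose lengths are uniformly bounded, there is a first position $m_0$ at which the family of simplices met is infinite; after a further subsequence the preceding face $\tau=\sigma_{m_0-1}$ is constant, and the infinitely many distinct simplices $\sigma^{(n)}\supset\tau$ met at position $m_0$ correspond to distinct cosets of a (fully quasiconvex) edge subgroup in $G_\tau$. Writing $\partial G_{\sigma^{(n)}}=h_n\,\partial G_{\sigma_0}$ for distinct coset representatives $h_n$, the convergence property (Theorem \ref{conv}) applied inside the relatively hyperbolic vertex group gives a point $\xi^{+}\in\partial G_\tau\subset\partial_{stab}G$ with $h_n\,\partial G_{\sigma_0}\to\xi^{+}$ uniformly; in particular $g_n v\to\xi^{+}$ in $\overline{EG^{h}}$. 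Applying the same construction to $\{g_n^{-1}\}$ produces $\xi^{-}$.

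Next I would upgrade this convergence of the single orbit to uniform convergence of $g_n K$ for every compact $K\subset\partial G\setminus\{\xi^{-}\}$. Here I would use that compact sets fade at infinity in $EG^{h}$ (Proposition 8.8 of \cite{martin}) together with the convergence criterion (Corollary 7.16 of \cite{martin}): it suffices to show that, for each point $x$ with $\bbar p(x)\in\bbar p(K)$, the geodesic $[v,g_n x]$ leaves the subcomplex $D(\xi^{+})$ --- which is finite and uniformly bounded by the acylindricity constant (Proposition \ref{domain}) --- through an exit simplex whose boundary $\partial G_{\,\cdot}$ converges uniformly to $\xi^{+}$. I would reduce this to the finitely many vertex groups $G_v$ with $v\in V(\tau)$ (equivalently $v\in V(D(\xi^{+}))$), in whose Bowditch boundaries $\xi^{+}$ lives, and run the dichotomy of Proposition \ref{Proposition 1}: simplices $\sigma$ adjacent to $\tau$ for which $\{g_nG_\sigma\}$ is infinite are handled by Theorem \ref{conv}, while those for which it is finite are handled after passing to a subsequence on which $g_n$ stabilizes $\sigma$, so that $\xi^{+}\in g_n\partial G_\sigma$.

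I expect the main obstacle to be, as in Proposition \ref{Proposition 1}, the presence of \emph{parabolic} limit points: $\xi^{+}$ or $\xi^{-}$ may be parabolic for some $G_v$ with $v\in V(D(\xi^{+})\cap D(\xi^{-}))$, so that conical convergence is unavailable. In that situation I would argue exactly as in Case I of the sketch of Proposition \ref{Proposition 1}: if $\xi^{-}$ is parabolic and $g_n\xi^{-}\to\xi'\ne\xi^{+}$, then $g_n\tilde\xi\to\xi^{+}$ for all $\tilde\xi\ne\xi^{-}$ forces a contradiction with $\xi^{-}$ being the unique fixed point of the parabolic, which pins down the behaviour on the parabolic subcomplex and lets the convergence property take over on the complementary cosets. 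The genuinely new point relative to Martin's hyperbolic setting is that, because $X$ is not proper, the accumulation points $\xi^{\pm}$ are extracted from the boundaries of vertex groups via Theorem \ref{conv} rather than from $X$ itself; checking that this extraction is compatible across the finitely many simplices of the uniformly bounded domain $D(\xi^{+})$, and that the parabolic directions are not over-counted, is the delicate step, but it is governed by the acylindricity bound of Proposition \ref{domain} and otherwise follows Martin's proof of Lemma 9.15 verbatim.
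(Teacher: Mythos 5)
Your proposal is correct and follows essentially the same route as the paper, which simply defers to Martin's proof of Lemma 9.15: extract $\xi^{\pm}$ as accumulation points of the bounded orbits $\{g_nv\}$, $\{g_n^{-1}v\}$ in $\partial_{stab}G$ via the convergence property of fully quasiconvex subgroups inside the relatively hyperbolic vertex groups (the ``Case 3'' mechanism of the sequential compactness argument), then upgrade to uniform convergence on compacta using the convergence criterion (Corollary 7.16 of \cite{martin}) and the fact that compact sets fade at infinity. One remark: the hypothesis that no vertex satisfies $g_nv_0=v_1$ infinitely often forces, for every simplex $\sigma$ (whose stabilizer fixes $\sigma$ pointwise), the cosets $g_nG_\sigma$ to be pairwise distinct after passing to a subsequence, so the finite-coset branch and the attendant parabolic-fixed-point discussion you import from Proposition \ref{Proposition 1} are vacuous here --- Theorem \ref{conv} always applies, which is exactly why this case is split off from the vertex-stabilizing one.
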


\begin{prop} (Lemma 9.16 of \cite{martin})
Let $\{g_{n}\}$ be a injective sequence in $G$ such that $d(g_{n}v_{0},v_{0})\rightarrow \infty$ for some(hence any) vertex $v_{0}$. 
Then there exists $\xi^{+},\xi^{-}\in \partial G$ such that for any compact set $K$ in $\partial G\backslash \{\xi^{-}\}$, $g_{n}K$ 
converges to $\xi^{+}$ uniformly.
\end{prop}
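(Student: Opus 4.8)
The plan is to establish the three propositions (the last of which, Lemma 9.16 of Martin, concerns the case $d(g_n v_0, v_0) \to \infty$) together as a trichotomy on the behavior of an injective sequence $\{g_n\}$ acting on the vertex set of $X$. The final statement is the "unbounded displacement" case, and I would treat it by reducing the dynamics on $\partial G$ to the dynamics on the hyperbolic space $X$ via the projection $\bar p : \overline{EG^h} \to \overline X$, exploiting that $X$ is $\mathrm{CAT}(0)$ and hence has a well-behaved visual boundary $\partial X$.

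First I would observe that since $d(g_n v_0, v_0) \to \infty$ and $X$ is hyperbolic (in fact $\mathrm{CAT}(0)$), the sequence of points $\{g_n v_0\}$ subconverges to a boundary point $\eta^+ \in \partial X$, and similarly $\{g_n^{-1} v_0\}$ subconverges to some $\eta^- \in \partial X$; after passing to a subsequence these limits exist. The natural candidates for the attracting and repelling points are then $\xi^+ = \eta^+$ and $\xi^- = \eta^-$, both lying in $\partial X \subset \partial G$. The key step is to show that for any compact $K \subset \partial G \setminus \{\xi^-\}$, the translates $g_n K$ converge uniformly to $\xi^+$. To this end I would use the description of neighborhoods of $\eta^+ \in \partial X$ given by the topology on $\overline{EG^h}$: a basic neighborhood has the form $V_U(\eta^+)$ for $U \in \OO_{\overline X}(\eta^+)$, and it suffices to show that $g_n \bar p(K)$ eventually lands inside $U \cap \overline X$. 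Because $\bar p(K) = \bigl(\bigcup_{\xi \in K} D(\xi)\bigr) \cup (K \cap \partial X)$ and each $D(\xi)$ is finite and uniformly bounded (Proposition \ref{domain}(ii)), the set $\bar p(K)$ is a compact subset of $\overline X$ that avoids a neighborhood of $\eta^-$; the uniform convergence $g_n \bar p(K) \to \eta^+$ then follows from the standard North-South dynamics of an unbounded sequence of isometries on a proper $\mathrm{CAT}(0)$ space with hyperbolic boundary.

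Next I would lift this convergence on $\overline X$ back up to $\overline{EG^h}$ using the fade-at-infinity property. The excerpt establishes that compact sets in $EG^h$ fade at infinity (via Proposition 8.8 of \cite{martin}), and combined with Martin's convergence criterion (Corollary 7.16 of \cite{martin}), this promotes the statement "$g_n \bar p(K)$ converges uniformly to $\eta^+$ in $\overline X$" to the statement "$g_n K$ converges uniformly to $\xi^+$ in $\partial G$." Concretely, for any neighborhood $W$ of $\xi^+$ in $\overline{EG^h}$, I would produce from $W$ a neighborhood $U$ of $\eta^+$ in $\overline X$ such that the preimage data controlling $W$ is governed by $U$; since $g_n \bar p(K)$ eventually enters $U$ and compact fibers fade, the translates $g_n K$ are eventually trapped in $W$.

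The main obstacle I anticipate lies not in the $\partial X$ endpoints themselves but in handling the \emph{parabolic} stabilizer data attached along the way, which is precisely where our relatively hyperbolic setting departs from Martin's hyperbolic one. In Martin's argument the local groups act on local spaces with purely conical limit behavior, whereas here each $\partial G_\sigma$ may contain bounded parabolic points, so one must verify that the fade-at-infinity mechanism interacts correctly with the combinatorial horoballs glued into each $\Gamma^h_\sigma$. I expect the delicate point to be checking that when the geodesics $[v_0, g_n x]$ pass through exit simplices whose vertex spaces carry parabolic fixed points, the convergence property for fully quasiconvex subgroups (Theorem \ref{conv}) still forces the translated exit data toward $\xi^+$ uniformly rather than stalling in a horoball. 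Once this is confirmed — essentially by the same analysis carried out in the parabolic Case I and Case II of the sketch following Proposition \ref{Proposition 1} — the argument closes, and the convergence criterion delivers uniform convergence of $g_n K$ to $\xi^+$.
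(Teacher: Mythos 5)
Your proposal is correct and follows essentially the same route as the paper, which simply establishes the fade-at-infinity property and then asserts that Martin's proof of Lemma 9.16 goes through verbatim --- namely the argument you reconstruct: push $K$ down to $\overline{X}$ via $\bar p$, note that $\bar p(K)$ avoids a neighbourhood of $\eta^-$ because domains are uniformly bounded, apply north-south dynamics on the proper hyperbolic space $X$, and lift back through the neighbourhoods $V_U(\eta^+)$. Your anticipated obstacle concerning parabolic points is in fact harmless for this particular lemma, since both poles lie in $\partial X$ and the basic neighbourhoods of a point of $\partial X$ in $\overline{EG^h}$ are defined purely by subsets $U$ of $\overline{X}$, so the combinatorial horoballs and local parabolic data never enter the argument.
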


Using above lemmas, we have the following theorem
\begin{thm}(Corollary 9.17 of \cite{martin})\label{conv action}
 $G$ has convergence action on $\partial G$
\end{thm}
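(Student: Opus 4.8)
The plan is to verify the defining condition of a convergence action directly for an arbitrary injective sequence in $G$, reducing to the three preceding propositions by a trichotomy on the orbit of a fixed base vertex. First I would record that $\partial G$ is a closed subspace of $\overline{EG^{h}}$, and hence is itself compact and metrizable by the compactness result above (Martin, Theorems 7.12, 7.13 of \cite{martin}); this is what makes $\partial G$ a legitimate space on which to test the convergence property. Since $G$ is infinite, it suffices to establish the criterion for sequences of pairwise distinct elements: given any sequence in $G$, pass to a subsequence that is either constant (nothing to prove) or injective, and assume henceforth that $\{g_{n}\}$ is injective.

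Next I would fix a vertex $v_{0}\in X$ and examine the orbit $\{g_{n}v_{0}\}$. Because the metric on $X$ is $G$-invariant, $|d(g_{n}v_{0},v_{0})-d(g_{n}v_{0}',v_{0}')|\leq 2\,d(v_{0},v_{0}')$ for any other vertex $v_{0}'$, so boundedness of the displacement does not depend on the chosen vertex; this is exactly what justifies the phrase ``for some (hence any) vertex'' appearing in the hypotheses of the propositions. After passing to a subsequence, precisely one of three situations occurs: (i) $d(g_{n}v_{0},v_{0})\to\infty$; (ii) $\{g_{n}v_{0}\}$ is bounded and there exist vertices $v_{0},v_{1}$ with $g_{n}v_{0}=v_{1}$ for infinitely many $n$; or (iii) $\{g_{n}v_{0}\}$ is bounded but no vertex is attained infinitely often. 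These three cases are mutually exclusive up to subsequence and jointly exhaustive, which is the only combinatorial point to check.

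In case (i) I would apply the third proposition (Lemma 9.16 of \cite{martin}); in case (ii), after restricting to the subsequence realizing $g_{n}v_{0}=v_{1}$, I would apply Proposition \ref{Proposition 1} (Lemma 9.14 of \cite{martin}); and in case (iii) I would apply the second proposition (Lemma 9.15 of \cite{martin}). In each case one obtains points $\xi^{+},\xi^{-}\in\partial G$ and a further subsequence $\{g_{n_{r}}\}$ such that $g_{n_{r}}K$ converges uniformly to $\xi^{+}$ for every compact set $K\subset\partial G\setminus\{\xi^{-}\}$, which is precisely the definition of a convergence action.

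Since the analytic content has already been carried out in the three propositions, there is no substantive remaining obstacle. The only points requiring genuine care are the exhaustiveness of the trichotomy and the base-vertex independence of the displacement; both follow from the simplicial structure of $X$ and the $G$-invariance of its metric, and neither involves the boundary dynamics. Thus the proof of Theorem \ref{conv action} amounts to assembling the three cases, concluding that $G$ acts as a convergence group on $\partial G$.
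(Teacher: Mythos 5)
Your proposal is correct and matches the paper's own (largely implicit) argument: the paper derives the theorem by exactly this trichotomy on the orbit of a base vertex, applying Proposition \ref{Proposition 1} and the two subsequent propositions in the three respective cases. Your additional remarks on base-vertex independence and exhaustiveness of the cases are the right points to check and are consistent with what Martin's Corollary 9.17 does.
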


\section{Main Theorem}\label{MT}

 Let $\mathcal{G(Y)}$ be a strictly developable simple complex of groups over a finite $M_{\kappa}$-simplicial complex $Y$ with $\kappa\leq 0$
 and satisfying the hypothesis of Theorem \ref{main theorem}. Let $G$ be the fundamental group of $\mathcal{G(Y)}$.

Local groups $G_v$ are relatively hyperbolic implies $G_v$ has convergence action on the Bowditch boundary $\partial G_v$.
Every point on $\partial G_v$ is either conical limit point or bounded parabolic point for the action of $G_v$ on $\partial G_v$.

\begin{lem}(by adapting Lemma 9.18 of \cite{martin})\label{conical point}
 The conical limit points of $G$ are precisely the conical limit points of vertex stabilizers and boundary points of $X$
\end{lem}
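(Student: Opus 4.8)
The plan is to prove both inclusions: first that the points of the two asserted types are conical for $G$, then that no parabolic point of a vertex group can be conical, so that these are the only ones.

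\textbf{Boundary points of $X$.} Let $\eta\in\partial X$. Since $Y$ is finite, the $G$-action on $X$ is cocompact, so $\eta$ is a conical limit point for $G\curvearrowright\partial X$: taking $x_n=c_\eta(n)\to\eta$ and, by cocompactness, $g_n\in G$ with $d(g_nx_n,v_0)$ bounded, we may pass to a subsequence so that $g_n\eta\to\xi^{+}$ and $g_n\zeta\to\xi^{-}$ for every $\zeta\in\partial X\setminus\{\eta\}$, with $\xi^{+}\neq\xi^{-}\in\partial X\subset\partial G$. To promote this to all of $\partial G$, recall that for $\zeta\in\partial_{stab}G$ the image $\overline p(\zeta)=D(\zeta)$ is a subcomplex of diameter uniformly bounded by the acylindricity constant (Proposition \ref{domain}(ii)). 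Since $X\subset\overline X\setminus\{\eta\}$ and $\{g_n\}$ has source--sink dynamics on $\overline X$ with source $\eta$ and sink $\xi^{-}$, the sets $g_nD(\zeta)=D(g_n\zeta)$ converge uniformly to $\xi^{-}$; because $\xi^{-}\in\partial X$, the basic neighbourhoods $V_U(\xi^{-})$ contain $\{\zeta'\in\partial_{stab}G:D(\zeta')\subset U\}$, whence $g_n\zeta\to\xi^{-}$ uniformly on compacta of $\partial G\setminus\{\eta\}$. Thus $\eta$ is conical for $G$.

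\textbf{Conical points of vertex stabilizers.} Let $\xi\in\partial_{stab}G$ be conical for some vertex stabilizer $G_v$, realized by an injective sequence $h_n\in G_v$ with $h_n\xi\to a$ and $h_n\zeta\to b$ for all $\zeta\in\partial G_v\setminus\{\xi\}$, where $a\neq b$. Each $h_n$ fixes $v$, so Proposition \ref{Proposition 1} applies with $v_0=v_1=v$: after passing to a subsequence there are points $\zeta^{\pm}\in\partial G$ with $h_nK\to\zeta^{+}$ uniformly for every compact $K\subset\partial G\setminus\{\zeta^{-}\}$. By the construction in that proof, $\zeta^{\pm}$ are the repelling and attracting points of the $G_v$-convergence dynamics of $\{h_n\}$ on $\partial G_v$, and comparing with the hypothesis forces $\zeta^{-}=\xi$ and $\zeta^{+}=b$. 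Since $\partial G_v\hookrightarrow\partial G$ is an embedding (Proposition 6.19 of \cite{martin}), we get $h_n\xi\to a$ in $\partial G$ with $a\neq b$, while $h_n\zeta\to b$ for every $\zeta\in\partial G\setminus\{\xi\}$. Hence $\xi$ is conical for $G$.

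\textbf{No other conical points, and the main obstacle.} Conversely, let $\xi\in\partial_{stab}G$ be conical for $G$. For each $v\in V(D(\xi))$ the point $\xi$ lies in $\partial G_v$ and, by relative hyperbolicity of $G_v$, is either conical or bounded parabolic for $G_v$; if it is conical for some such $v$ we are in the previous case. Otherwise $\xi$ is parabolic for every such $v$, so its stabilizer contains the infinite maximal parabolic subgroup $P\leq G_v$ fixing exactly $\xi$ in $\partial G_v$ (Remark \ref{quasiconvex-parabolic}), and $P$ fixes $\xi$ in $\partial G$. A point fixed by an infinite parabolic subgroup of a convergence action is never conical (conical and bounded parabolic points are disjoint), and indeed the next lemma exhibits such a $\xi$ as bounded parabolic for $G$; this contradicts conicality, so $\xi$ must be conical for a vertex stabilizer. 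This final paragraph is the genuinely new step: in Martin's hyperbolic setting every boundary point of a vertex group is already conical, whereas here the relative structure produces parabolic points that must be removed from the conical locus, and the difficulty is reconciling the local parabolic data (via full quasiconvexity and Remark \ref{quasiconvex-parabolic}) with the global convergence dynamics through the mutual exclusivity of conical and bounded parabolic points.
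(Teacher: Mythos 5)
Your second and third steps are sound and essentially match the paper: pushing the local north--south dynamics of a sequence $\{h_n\}\subset G_v$ through the embedding $\partial G_v\hookrightarrow\partial G$ and matching it against the global convergence action is exactly the paper's argument for conical points of vertex stabilizers, and your closing observation (a point of $\partial_{stab}G$ that is parabolic for every vertex of its domain is bounded parabolic for $G$ by the next lemma, hence not conical) correctly supplies the converse inclusion, which the paper leaves implicit.

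The genuine gap is in your treatment of $\eta\in\partial X$, and it sits exactly where the paper has to work hardest. You ``pass to a subsequence so that $g_n\eta\to\xi^{+}$ and $g_n\zeta\to\xi^{-}$ \dots with $\xi^{+}\neq\xi^{-}\in\partial X$'', i.e.\ you assume $g_nv_0$ subconverges in $\overline{X}=X\cup\partial X$ and that the source--sink poles of $\{g_n\}$ both lie in $\partial X$. But $X$ is not locally compact: whenever some $G_\sigma$ has infinite index in a vertex group $G_v$, infinitely many simplices share a face, so $\overline X$ is not sequentially compact and $g_nv_0$ need not accumulate anywhere in $\overline X$ even though $d(v_0,g_nv_0)\to\infty$. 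Concretely, the geodesics $[v_0,g_nv_0]$ can fan out through infinitely many distinct simplices at bounded distance from $v_0$; then (Case 3 of the compactness discussion, via the convergence property of fully quasiconvex subgroups) the attracting point $\xi^{-}$ of $\{g_n\}$ on $\partial G$ lies in $\partial_{stab}G$ --- in fact in $\partial G_{v_0}$ for the basepoint vertex itself --- so the neighbourhoods $V_U(\xi^{-})$ you invoke do not exist and the assertion ``$D(g_n\zeta)\to\xi^{-}$'' loses its meaning. Worse, since $g_nc_\eta$ passes within bounded distance of $v_0$, the limit of $g_n\eta$ is also constrained to $\partial G_{v_0}$ in this situation, so you cannot conclude $\xi^{+}\neq\xi^{-}$. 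The paper's proof is devoted precisely to this case: it replaces the sequence by $h_ng_n$ for suitable $h_n$ in the vertex group --- produced using transitivity of $G_{\sigma_1}$ on $\Gamma_{\sigma_1}$, the quasiconvexity of $\Gamma^h_{\sigma_1}$ in $\Gamma^h_{\tau_1}$ from Lemma \ref{subgroup qc}, and the convergence property --- to push the attracting point out of $\partial G_{\sigma_1}$, and iterates; the acylindricity of the action is what guarantees the iteration terminates after finitely many steps. This modification argument is the genuinely new content of the lemma, and it is entirely absent from your proposal.
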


\textit{Sketch of Proof:} Consider a conical limit point $\alpha$ in $\partial G_v$ for the action of $G_v$  on $\partial G_v$.
As the map $\partial G_v\to\partial G$ is embedding, the point $\alpha$ is conical limit point for the action of $G_v$ on $\partial G$,
As $G$ has convergence action on $\partial G$, $\alpha$ is conical limit point for the action of $G$ on $\partial G$.

Now let $\eta\in \partial X$. We need to find a sequence $\{{g}_{n}\}$ and $\xi^{+}\neq\xi^{-}\in \partial G$ 
such that $g_{n}\eta \rightarrow \xi^{+}$, $g_{n}\xi^{'} \rightarrow \xi^{-},\forall \xi^{'}\in \partial G \backslash \{\eta\}$.
Since action of $G$ on $X$ is co-compact we can choose a simplex $\sigma$ and a sequence $\{g_n\}$ such that the sequence of simplices $\{g_n \sigma\}$ 
intersect with the geodesic $[v_0,\eta)$. Let $v$ be a vertex of $\sigma$ then $g_{n}\tilde{x}$ converges to $\eta$ for all $\tilde{x}\in \partial G_{v}$. Choose $v$ as the basepoint.  

Consider the sequence $\{g_{n}^{-1}\}$ of group elements. Since $d(v,g_{n}^{-1}v)\rightarrow \infty$,
let $g_{n}^{-1}v$ be converge to $\xi^{-}(\in \partial G)$. Also by Proposition 7.3 except for possibly one elements, 
$g_{n}^{-1}$-translates of boundary points will converges to $\xi^-$.

Suppose $\xi^-\in \partial X$. Note that $<g_{n}^{-1}v,g_{n}^{-1}g_{m}v>_v=d(g_{n} v,v)+d(g_{m}v,g_{n}v)-d(v,g_{m}v)$. 
Now taking projection of $g_{m}v$ and $g_{n}v$ onto geodesic $[v,\eta)$ we can check that $<g_{n}^{-1}v,g_{n}^{-1}g_{m}v>_v$ is 
uniformly bounded for all $m$ and $n$. Hence $g_{n}^{-1}\eta$ cannot converge to $\xi^-$ and we are done. 

Let $\xi^-\in \partial_{stab}G$ and $\tilde{x}\in \partial G_{v}$. Now translating the geodesic $[v,\eta)$ by isometry $g_{n}^{-1}$, we see that the vertex $v$ 
lie uniformly closed to geodesic $[g_{n}^{-1}v,g_{n}^{-1}\eta)$. Hence if $g_{n}^{-1}\eta$ converges to $\xi^-$, 
i.e $g_{n}^{-1}\eta$ and $g_{n}^{-1}\tilde{x}$ converges to same point, then $\xi^-$ must be belongs to $\partial G_{v}$. 
If we can show that there exists $\{h_n\}$ from $G_v$ such that $(h_{n}g_{n}^{-1})v$ does not converge to a point of 
$\partial G_{v}$, then as $(g_{n}h_{n}^{-1})\tilde{x}$ still converges to $\eta$, replacing $\{g_{n}^{-1}\}$ with $\{h_{n}g_{n}^{-1}\}$, we are done.

Let for each $n$, $\sigma_{1}^{(n)}$ be the first simplex met by $[v,g_{n}^{-1}v]$ after leaving $v$. 
Then upto multiplying $g_{n}^{-1}$ by an element from $G_v$ on the left we can let $[v,g_{n}^{-1}v]$ meet a single simplex,
say, $\sigma_{1}$ after leaving $v$. Also let $\tau_{1}$ be the face of $\sigma_1$ which is met by $[v,g_{n}^{-1}v]$ after 
leaving $\sigma_1$. Similarly let $\sigma_{2}^{(n)}$ be the first simplex met by $[v,g_{n}^{-1}v]$ after leaving $\tau_1$. 
Since $G_{\sigma_{1}}$ is fully quasi convex in $G_{\tau_{1}}$, by Lemma \ref{subgroup qc}, $\Gamma^{h}_{\sigma_{1}}$ will be quasi convex in $\Gamma^{h}_{\tau_{1}}$. Choose any $x_{n} \in \Gamma_{\sigma_{2}^{(n)}}$ and let $y_n$ be its projection on $\Gamma^{h}_{\sigma_{1}}$, so $y_n$'s will lie in $\Gamma_{\sigma_{1}}$. Then we can find $\{h_{n}\}\subset G_{\sigma_1}\subset G_{v}$ such that $h_{n}x_{n}$ project to $1$(1 is the identity) for all $n$ since the action of $G_{\sigma_{1}}$ on $\Gamma_{\sigma_{1}}$ is transitive.
Hence $h_{n}\overline{\Gamma^{h}}_{\sigma_{2}^{(n)}}$ do not converge to a point of $\partial G_{\sigma_{1}}$. 
Also since $\Gamma^h_{\sigma_{1}}$ is fixed by all $h_n$'s $h_{n}\overline{\Gamma^{h}}_{\sigma_{2}^{(n)}}$ cannot converge to a point of $\partial G_{\tau_{1}}$. 
Hence by convergence property upto subsequence we can let $\sigma_{2}^{(n)}$ to be constant, say, $\sigma_2$.
We replace $\{g_{n}^{-1}\}$ with $\{h_{n}g_{n}^{-1}\}$. Notice if $G_{\sigma_1}\cap G_{\sigma_2}$ is
finite then the limit of $\{g_{n}^{-1}\tilde{x}\}$, i.e. $\xi^{-}$ cannot be contained in $\partial G_{\sigma_{1}}$ and 
so $\xi^{-}\notin \partial G_v$ because of the convexity of $D(\xi^-)$. If $G_{\sigma_1}\cap G_{\sigma_2}$ is
infinite then we again follow the same process. Since action of $G$ on $X$ is acylindrical after finite number of steps 
intersection of stabilizers will be finite.

The central idea of the following lemma is due to Dahmani \cite{dah}.
\begin{lem}\label{parabolic point}
(i) The image of a bounded parabolic point in vertex stabilizer's boundary is a bounded parabolic for $G$,\\
(ii) The corresponding maximal parabolic subgroup is the image of a maximal parabolic subgroup in the vertex stabilizer.
\end{lem}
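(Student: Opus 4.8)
The plan is to reduce the statement to two facts about the stabilizer $\Pi:=\operatorname{Stab}_G(\xi)$: that $\Pi$ coincides with the image in $G$ of the maximal parabolic subgroup $P:=\operatorname{Stab}_{G_v}(\xi)$ of the vertex group (this is (ii)), and that $P$ acts properly discontinuously and cocompactly on $\partial G\setminus\{\xi\}$ (this gives (i), by the definition of a bounded parabolic point). I would first show that $\Pi$ preserves the domain $D(\xi)$: since the gluing data and the embeddings $\partial G_w\hookrightarrow\partial G$ are $G$-equivariant, any $g\in\Pi$ carries a simplex $\sigma$ with $\xi\in\partial_{stab}G_\sigma$ to one with $\xi=g\xi\in\partial_{stab}G_{g\sigma}$, so $g$ permutes the defining simplices of $D(\xi)$. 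As $D(\xi)$ is a finite complex (Proposition \ref{domain}), the map $\Pi\to\operatorname{Aut}(D(\xi))$ has finite image, and its kernel $\Pi_0$, consisting of the elements of $\Pi$ that fix $D(\xi)$ pointwise, has finite index in $\Pi$. Every element of $\Pi_0$ fixes the vertex $v$, hence lies in $G_v$, and also fixes $\xi$, so $\Pi_0\subseteq P$; the inclusion $P\subseteq\Pi$ is clear.

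To obtain the exact equality $\Pi=P$ demanded by (ii), I would show that the finite quotient $\Pi/\Pi_0$ is trivial. The action of $G$ on $X$ preserves the simplicial types inherited from the quotient $Y$, so no element of $\Pi$ can interchange vertices of $D(\xi)$ of different type; for vertices of equal type I would use the compatibility of the vertex parabolics along $D(\xi)$ — namely $\operatorname{Stab}_{G_e}(\xi)=P\cap G_e$ for an edge $e\subset D(\xi)$, by Remark \ref{quasiconvex-parabolic} — together with the finite intersection property (Proposition \ref{finite}) and acylindricity to conclude that $\Pi$ must fix $v$. This forces $\Pi\subseteq G_v$ and hence $\Pi=P$, completing (ii) and identifying the maximal parabolic subgroup.

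For the proper discontinuity required in (i), note that $P$ fixes $v$, so any sequence of distinct elements $p_n\in P$ fixes the vertex $v$. Because $\xi$ is a bounded parabolic point of the relatively hyperbolic group $G_v$, the subgroup $P$ has $\xi$ as its unique fixed point, and the attracting and repelling points of any convergence subsequence of $\{p_n\}$ acting on $\partial G_v$ are both equal to $\xi$. Feeding this into Proposition \ref{Proposition 1} (the case of a sequence fixing a common vertex), the attracting and repelling points for the $G$-action on $\partial G$ are again both $\xi$, so for any compact $K\subseteq\partial G\setminus\{\xi\}$ the translates $p_nK$ converge uniformly to $\xi$ and therefore eventually leave $K$. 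Hence only finitely many $p_n$ satisfy $p_nK\cap K\neq\emptyset$, which is proper discontinuity. (That $\xi$ is genuinely non-conical, and so a true parabolic point, follows from Lemma \ref{conical point}, since $\xi$ is bounded parabolic in $G_v$ and lies in $\partial_{stab}G$.)

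The substantial step, which I expect to be the main obstacle, is cocompactness of $P$ on $\partial G\setminus\{\xi\}$. Here I would begin from the fact that $P$ acts cocompactly on $\partial G_v\setminus\{\xi\}$ (bounded parabolicity of $\xi$ in $G_v$) and propagate this across all of $\partial G\setminus\{\xi\}$ using the neighbourhood basis $W_{\UU,\ep}(\xi)$ of $\xi$ built from the cones $Cone_{\UU,\ep}(\xi)$ and the exit simplices $\sigma_{\xi,\ep}(\cdot)$. Every point of $\partial G\setminus\{\xi\}$ either lies in the finite union $\bigcup_{w\in V(D(\xi))}\partial G_w$, handled directly by the cocompact actions of the parabolics $P\cap G_w$, or is detected by a geodesic from $v$ that leaves $D(\xi)$ through an exit simplex. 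The crux is to prove that these exit simplices fall into finitely many $P$-orbits — which should follow from the cocompactness of $P$ on $\partial G_v\setminus\{\xi\}$ together with acylindricity — and that $P$ sweeps the corresponding cone directions into a compact subset in the topology of $\partial G$; assembling a compact fundamental domain for $P$ on $\partial G_v\setminus\{\xi\}$ with the finitely many compact contributions from these exit directions then produces a compact fundamental domain for $P$ on all of $\partial G\setminus\{\xi\}$. Controlling these non-compact cone tails — exactly the directions that carry no parabolic points in Martin's hyperbolic setting but do here — is the delicate point, and it is where the argument must genuinely exploit that $P$ is a maximal parabolic of the relatively hyperbolic vertex group rather than merely a quasiconvex subgroup.
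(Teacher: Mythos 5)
Your overall architecture matches the paper's: identify $\operatorname{Stab}_G(\xi)$ with a vertex parabolic, and build a compact fundamental domain for its action on $\partial G\setminus\{\xi\}$ out of the fundamental domains $K_i$ for the vertex parabolics $P_{v_i}$ together with contributions indexed by exit simplices. The problem is that at both points where the proof has real content you stop and substitute a plan for an argument. For (ii), your route via the finite image of $\Pi\to\operatorname{Aut}(D(\xi))$ plus ``type preservation, the finite intersection property and acylindricity'' is not carried out, and it aims at the stronger claim that $\Pi$ fixes the originally chosen vertex $v$, which need not hold: the correct conclusion is only that $\Pi$ equals the parabolic at \emph{some} vertex of $D(\xi)$. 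The paper's argument is short and unconditional: $D(\xi)$ is a finite convex subcomplex of the $CAT(0)$ space $X$ stabilized by $P:=\operatorname{Stab}_G(\xi)$, so $P$ fixes a point $x_0\in D(\xi)$; since an element fixing an interior point of a simplex fixes that simplex pointwise, $P$ fixes a vertex $v_i$ of $D(\xi)$ and hence the point $\xi_{v_i}$, giving $P\subseteq P_{v_i}\subseteq P$. This fixed point $x_0$ is not a luxury: it is also the basepoint used in the cocompactness argument, so your proof of (i) implicitly depends on establishing it.

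For (i), you flag cocompactness as ``the main obstacle'' and then leave exactly that obstacle unresolved. The two missing facts are precisely what the paper proves. First, every simplex $\sigma$ of $N(D(\xi))\setminus D(\xi)$ lies in $P\cdot S_i$ for some $i$, where $S_i=\{\sigma:\partial G_\sigma\cap K_i\neq\emptyset\}$: such a $\sigma$ has a vertex $v_i\in D(\xi)$ with $\xi_{v_i}\notin\partial G_\sigma$, so $\partial G_\sigma$ is a compact subset of $\partial G_{v_i}\setminus\{\xi_{v_i}\}$ and can be translated into $K_i$ by an element of $P_{v_i}\subseteq P$ (no appeal to acylindricity or to finitely many $P$-orbits of exit simplices is needed here). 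Second, and this is the genuinely delicate point you gesture at but do not address, the closure $\overline{K_i'}$ of the set of boundary points whose domains meet the corresponding cone directions must not contain $\xi$; the paper's Claim 1 proves this by contradiction using the convergence property of fully quasiconvex subgroups: a sequence $\alpha_m\to\xi$ in $K_i'$ would force the boundaries of the associated exit simplices to converge uniformly to $\xi_{v_i}$, putting $\xi_{v_i}$ in the compact set $K_i\subset\partial G_{v_i}\setminus\{\xi_{v_i}\}$. Without these two steps the set $\bigcup_i(K_i\cup\overline{K_i'})$ is neither known to be compact in $\partial G\setminus\{\xi\}$ nor known to be a fundamental domain, so bounded parabolicity is not established.
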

\begin{proof}
(i)  Let $\tilde{\xi}$ be a bounded parabolic point of boundary of some vertex stabilizer and $\pi(\tilde{\xi})=\xi$ 
be its image in $\partial G$. We will show $\xi$ is bounded parabolic.

Let  $P=stab(\xi)$ in $G$. Then $P$ stabilizes $D(\xi)$, domain of $\xi$.
Let $\xi_{v_{i}}\in \partial G_{v_{i}}$ be such that $\pi(\xi_{v_{i}})=\xi$, where $\{v_{1},...,v_{n}\}$ is 
the set of vertices of $D(\xi)$. From construction of $\partial_{stab}G$, for each $i=1,,...,n$, $\xi_{v_i}$ is bounded parabolic 
point of $G_{v_i}$ and let $P_{v_i}$ be the maximal parabolic subgroup of $G_{v_i}$ stabilizing $\xi_{v_i}$.
From the construction of $D(\xi)$, $\xi_{v_1},...,\xi_{v_n}$ are the all which are identified to $\xi$. Thus,
$P_{v_i}$ also stabilizes $\{\xi_{v_1},...,\xi_{v_n}\}$ and hence it stabilizes $D(\xi)$. So, $P_{v_i}$ is a subgroup
of $P$. Let  $K_{i}$ be a compact fundamental domain in $\partial G_{v_{i}}\setminus\{\xi_{v_{i}}\}$
for co-compact action of $P_{v_{i}}$ on $\partial G_{v_{i}}\setminus\{\xi_{v_{i}}\}$.
Let $N(D(\xi))$ be one open simplicial neighbourhood of $D(\xi)$ in $X$
and $S(N(D(\xi))\setminus D(\xi))$ be collection of simplices in $N(D(\xi))$  of $D(\xi)$ that is not 
contained in $D(\xi)$. Let $S_{i}:=\{\sigma\in S(N(D(\xi))\setminus D(\xi)): \partial G_{\sigma}\cap K_{i}\neq \emptyset\}$.

We claim that $\bigcup\limits_{i=1}^{n} PS_{i}=S(N(D(\xi))\backslash D(\xi))$. 
Let $\sigma\in S_i$ and $p\in P$. As $P$ stabilizes $D(\xi)$,
then $p\sigma\in  S(N(D(\xi))\setminus D(\xi))$. Conversely, let $\sigma\in (N(D(\xi))\setminus D(\xi))$
and $v_i\in D(\xi)\cap\sigma$. Then $\partial G_{\sigma}\subset \partial G_{v_i}\setminus\{\xi_{v_i}\}$.
But, $K_{i}$ is a fundamental domain for $P_{v_{i}}$ hence there exists $p\in P_{v_{i}}\hookrightarrow P$ such that
$p\partial G_{\sigma}=\partial G_{p\sigma}$ intersect with $K_{i}$. So, $p\sigma\in S_i$ and this proves our claim. 

$D(\xi)$ is a finite closed convex subspace of the $CAT(0)$ space $X$ and is stabilized by $P$. Hence, $P$ has a fix point, say $\{x_0\}$, in $D(\xi)$.
The topology on $\partial G$ is independent of base point.
Let us take $x_{0}$ to be the base point for the topology of $\partial G$.  For $x\in \bbar{X}\setminus D(\xi)$, there exists $0<\epsilon_{x}<1$
such that $x\in \bbar{X}\setminus D_{\epsilon_x}(\xi)$. Let $\sigma_{x,\epsilon_x}\in S(N(D(\xi))\setminus D(\xi))$
denote the exit simplex for the geodesic $[x_0,x]$.\\
$\bullet$ For each $i$, let $T_{i}:=\{x\in \bbar{X}\setminus D(\xi) : \sigma_{x,\epsilon_x}\in S_{i}\}$.\\
$\bullet$ Let $K^{'}_{i}:=\{ \alpha \in \partial G : D(\alpha)\cap {T_{i}}\neq\emptyset\}$ 
and $\overline{K^{'}_{i}}$ be its closure in $\partial G$.\\
For each $i$, $K_{i}\cup \overline{K^{'}_{i}}$ being closed is  compact in $\partial G$.
 We claim $\xi\notin (K_{i}\cup \overline{K^{'}_{i}})$ for all $i$
and $\bigcup\limits_{i=1}^{n}(K_{i}\cup \overline{K^{'}_{i}})$ is a compact fundamental domain for action of $P$ on $\partial G\backslash \{\xi\}$.

\underline{Claim 1. $\xi\notin (K_{i}\cup \overline{K^{'}_{i}})$.}

For each $i$, $K_i\subset \partial G_{v_i}\setminus\{\xi_{v_i}\}$ implies  $\xi\notin K_{i}$ and $D(\xi)\cap T_i=\phi$
implies $\xi\notin K^{'}_{i}$. 
Now if possible let $\{\alpha_{m}\}$ be a sequence in $K^{'}_{i}$ for some $i$
such that $\alpha_{m}\rightarrow \xi$. By the definition of the topology on $\partial G$, 
$D(\alpha_{m})\setminus D(\xi)\subset \widetilde{Cone}_{\UU{},\ep{}}(\xi)$ for any $\xi$-family $\UU{}$ and $0<\ep{}<1$.
Let  $x_{m}\in D(\alpha_{m})\cap T_i$ then by definition of $K_i^{'}$, 
$\partial G_{\sigma_{x_m},\epsilon_{x_m}}\cap K_i\neq \phi$ for all $m$.
Also, $\partial G_{\sigma_{x_m},\epsilon_{x_m}}\subset \partial G_{v_i}\setminus\{\xi_{v_i}\}$, by convergence Property 
$\partial G_{\sigma_{x_{m},\epsilon_{x_m}}}\rightarrow \xi_{v_{i}}$ uniformly. This implies $\xi_{v_{i}}\in K_{i}$, which is a contradiction.

\underline{Claim 2. $\bigcup\limits_{i=1}^{n}P(K_{i}\cup \overline{K^{'}_{i}})=\partial G\backslash \{\xi\}$.}

Let $\alpha(\neq \xi)\in \partial G$. If $\alpha\in \partial X$ then the claim is true since $x_{0}$ is fixed by $P$
and $\bigcup\limits_{i=1}^{n} PS_{i}=S(N(D(\xi))\backslash D(\xi))$. For $\alpha \in \partial_{stab} G$
we will divide the proof of the claim into two cases.

Case 1. $D(\alpha)\cap D(\xi)\neq \emptyset$. Then $\alpha\in \partial G_{v_{i}}$ for some $v_i\in D(\alpha)\cap D(\xi)$.
 $\alpha\neq \xi_{v_{i}}$, now since $K_{i}$ is a fundamental domain for the action of $P_{v_{i}}$ in $\partial G_{v_{i}}\setminus\{\xi_{v_i}\}$,
 there exists $x\in K_{i}$ and  $p\in P_{v_{i}}\hookrightarrow P$ such that $\alpha=px\in PK_i$

Case 2. $D(\alpha)\cap D(\xi)= \emptyset$. Let $x\in D(\alpha)$ and $\sigma_{x,\epsilon_x}\in S(N(D(\xi))\setminus D(\xi))$ be the exit simplex
for the geodesic $[x_0,x]$ in $X$. As $\bigcup\limits_{i=1}^{n} PS_{i}=S(N(D(\xi))\setminus D(\xi))$ and $P$ fixes $x_0$
there exists $p\in P$ such that $\sigma_{px,\epsilon_x}=p\sigma_{x,\epsilon_x}\in S_i$ for some $i$. So, $px\in T_i$ and $px\in pD(\alpha) =D(p\alpha)$.
So, $p\alpha\in K'_i$ and hence $\alpha\in PK'_i$.\\
 (ii) Let $\tilde{\xi}$ be a bounded parabolic point of boundary of some vertex stabilizer and $\pi(\tilde{\xi})=\xi$ 
be its image in $\partial G$, with $P=stab(\xi)$ in $G$. Then $P$ stabilizes $D(\xi)$ and it fixes a point $x_0\in D(\xi)$.
Let $\sigma$ be the simplex in $D(\xi)$ containing $x_0$ in the interior. From the definition of action of $G$ on $X$, if some element of $G$
fixes an interior point of a simplex then it fixes the whole simplex pointwise. So, $P$ fixes $\sigma$ pointwise. Without loss of generality,
we can take $x_0$ to be a vertex $v_i$ of $\sigma$. Thus $P$ fixes ${\xi}_{v_i}$ and hence $P=P_{v_i}$.

\end{proof}

\textbf{Proof of Theorem \ref{main theorem}:} From Lemma \ref{conv action}, $G$ has a convergence action on compact metrizable space $\partial G$.
The limit points are either conical (by Lemma \ref{conical point}) or bounded parabolic (by Lemma \ref{parabolic point}). Hence, by Theorem
\ref{yaman thm}(due to Yaman, \cite{yaman}), $G$ is hyperbolic relative to $\mathcal{P}$, where $\mathcal{P}$ is the collection of the images of ${P}_{v}$ in $G$
under the natural embedding $G_v\hookrightarrow G$. This embedding extends to a $G_v$-equivariant embedding 
$\partial G_v\hookrightarrow \partial G$. Hence, the limit set for the action of $G_v$ on $\partial G$  is $\partial G_v$ and this action
is geometrically finite implies that $G_v$ is  quasiconvex in $G$. For $\xi\in\partial G_v\subset\partial G$, the domain $D(\xi)$ of $\xi$
is finite implies that $\cap_{n\geq 1}g_n\partial G_v$ is empty for any sequence of infinite distinct left cosets $g_nG_v$
of $G_v$ in $G$. Thus, $G_v$ is fully quasiconvex in $G$. Local groups are fully quasiconvex in vertex groups implies that local
groups are fully quasiconvex in $G$.

\end{document}